\renewcommand{\d}{\mbox{dist}}
\newcommand{\cS}{\mathcal{S}}
\newcommand{\cR}{\mathcal{R}}
\newcommand{\ECT}{\operatorname{ECT}}
\newcommand{\LECT}{\operatorname{LECT}}
\newcommand{\SELECT}{\operatorname{SELECT}}
\newcommand{\N}{\mathbb{ N}}
\newcommand\define[1]{\textbf{#1}}
\newcommand{\Omin}{\mathcal{O}}
\newcommand{\CF}{\operatorname{CF}}
\newcommand{\Def}{\operatorname{Def}}
\newcommand{\Rad}{\mathcal{R}}
\newcommand{\CS}{\operatorname{CS}}
\newcommand{\AffGr}{\text{AffGr}}
\theoremstyle{plain}
\newtheorem{thm}{Theorem}[section]
\newtheorem{lem}[thm]{Lemma}
\newtheorem{prop}[thm]{Proposition}
\newtheorem{cor}[thm]{Corollary}
\newtheorem{defn}[thm]{Definition}
\newtheorem{ex}[thm]{Example}
\newtheorem{rmk-defn}[thm]{Remark}
\newtheorem{rmk-thm}[thm]{Remark}
\newtheorem{rmk-lem}[thm]{Remark}
\newtheorem{rmk-prop}[thm]{Remark}
\numberwithin{equation}{section}
\title[The Lifted Euler Characteristic Transform]{Representing Fields without Correspondences: the Lifted Euler Characteristic Transform}
\author{Henry Kirveslahti}
\address{Department of Statistical Science, Duke University; Durham, NC USA}
\email{henry.kirveslahti@duke.edu}
\author{Sayan Mukherjee}
\address{Departments of Statistical Science, Mathematics, Computer Science, Biostatistics \& Bioinformatics, Duke University; Durham, NC USA}
\email{sayan@stat.duke.edu}
\subjclass[2021]{Primary: 62R40, 52C45; Secondary: 68U05}
\keywords{Euler calculus, persistent homology, statistical shape analysis}
\begin{document}

\begin{abstract}

Topological transforms have been very useful in statistical analysis of shapes or surfaces without restrictions that the shapes are diffeomorphic and requiring the estimation of correspondence maps. In this paper we introduce two topological transforms that generalize from shapes to fields,  $f:\mathbb{R}^3 \rightarrow \mathbb{R}$. Both transforms take a field and associate to each direction $v\in S^{d-1}$ a summary obtained by scanning the field 
in the direction $v$. The transforms we introduce are of interest for both applications as well as their theoretical properties. The topological transforms for shapes are based on an Euler calculus on sets. A key insight in this paper is that via a lifting argument one can develop an Euler calculus on real valued functions from the standard Euler calculus on sets, this idea is at the heart of the two transforms we introduce. We prove
the transforms are injective maps. We show for particular moduli spaces of functions we can upper bound the number of directions needed determine any particular function.
\end{abstract}

\maketitle

\section{Introduction}
In this paper we introduce topological transforms for fields $f: \mathbb{R}^d \rightarrow \mathbb{R}$ that are both practically useful 
for data analysis and are of theoretical interest. In particular we introduce the Lifted Euler Characteristic Transform (LECT) and the 
Super Lifted Euler Characteristic Transform (SELECT). Previously, topological transforms---the Euler Characteristic Transform 
(ECT) and Persistent Homology Transform (PHT)---were used to analyze classes of shapes without restricting the shapes to be
diffeomorphic and requiring correspondence maps between shapes \cite{PHT}. The generalization to shape classes that are not
diffeomorphic is important because in several datasets \cite{MillerNotices} the shapes are not qualitatively or topologically the same and the PHT and ECT approaches were one of the first that could be applied to these datasets. In the setting of modeling and comparing data that are
represented as three-dimensional fields state-of-the-art analysis tools are based on diffeomorphism or deformation based approaches 
\cite{Ashburner}. In the same way that the ECT and PHT allowed for the analysis of shape collections that are not diffeomorphic the LECT and SELECT allow for the analysis of classes of fields that are not diffeomorphic.

The extension from shapes to fields is of interest, because not all imaging modalities fit into the shape framework. Examples of such modalities include magnetic resonance imaging (MRI), micro-computed tomography (Micro-CT) for soft tissue, and positron emission tomography (PET). These are all inherently continuous objects, while the construction behind the ECT and PHT is discrete and their underlying machinery does not work with continuous data without reservations \cite{definableintegrals}.

In both data science and computational geometry, quantifying differences in fields is a difficult problem. The crux of the problem is formulating a digital representation of the field that allows
for effective and efficient downstream analysis and preserves all the information in the field. A fundamental problem in representing fields as well as shapes is that there is no obvious coordinate system. The naive representation of a field as function values on a three dimensional grid works poorly in applications as the field fundamentally carries geometric information, again this is the reason that diffeomorphism based methods are used. We will show that using the SELECT we can both cluster fields as well as predict clinical signatures from the shape of the field, a regression
problem. These applications are analogous to the shape setting where the PCT and ECT were used to cluster heel bones from primates for evolutionary
applications \cite{PHT} and predict clinical outcomes of brain tumors based on their shape in~\cite{tumor}.

Before we outline the LECT and SELECT we first summarize the PHT and ECT. At a high level the ECT and PHT consider a shape which is a subset $M$ of $\mathbb{R}^d$, and associate to each direction $v\in S^{d-1}$ a shape summary obtained by scanning $M$ in the direction $v$. This process of scanning has a Morse-theoretic and persistent-topological flavor---we study the topology of the sublevel sets of the height function $h_v=\langle v,\cdot\rangle\mid_M$ as the height varies. This yields what is called the \define{Euler Curve} of $M$ in direction $v$. In the case of the ECT, this curve records the Euler characteristic of each sublevel set, and for the PHT, the \define{persistence diagram}, which pairs critical values of $h_v$ in a computational way. The ECT and the PHT associate to any sufficiently tame subset $M\subset \mathbb{R}^d$ a map from the sphere to the space of Euler curves and persistence diagrams, respectively.

The main theoretical challenge in extending topological summaries such as the ECT or PHT from shapes to fields is extending Euler calculus from sets to real valued functions. In Section \ref{sec:lift} we introduce the Lifted Euler Characteristic Transform which uses a lifting argument to 
define an Euler calculus that holds for real valued functions and allows us to define the analog of the ECT for fields. In Section \ref{sec:inj}
we prove that the LECT is an injective map when one considers all directions of the sphere. We then introduce a variant of the LECT called the Super Lifted Euler Characteristic Transform which is much more robust and designed for real applications in Section \ref{sec:robust}. In Section
\ref{sec:mar} we relate the Euler calculus we proposed based on the lifting idea to other approaches to Euler calculus for real valued functions.
In Section \ref{sec:howmany} we provide an upper bound on the number of directions required to determine a function for certain moduli spaces of functions. These moduli spaces are uncountable and infinite so this result is very interesting as it states a finite representation
can identify functions in the classes. In Section \ref{sec:applications} we demonstrate the promise of the SELECT on simulated and real data.

\section{An Euler Calculus and Euler Characteristic Transform for Fields}

At the heart of the ECT and PHT is the idea of integrating constructible and definable functions \cite{tametopology} with respect to the Euler characteristic as a finitely-additive measure, this idea is called Euler calculus \cite{definableintegrals}. Recall that the ECT was designed for shapes or surfaces. Our goal in this section is to develop an analog for the ECT for fields, specifically $f: \mathbb{R}^d \rightarrow \mathbb{R}$. 

In this section we will propose the Lifted Euler Characteristic Transform which is based on an Euler calculus for fields as an analog to the ECT for shapes.

\subsection{Review of the Euler Characteristic Transform}
\label{sec:rev}

We first review the ECT and the tameness conditions we require for spaces of shapes. Here we provide a summary of the ideas, for details see
\cite{CMT}. An important theme in this paper is our generalization of the ECT and Euler calculus for fields will follow from the same tameness 
conditions we state here.

In imaging applications shapes are digitized as meshes so it is natural to consider a simplicial complex as the mathematical representation of a shape. In \cite{CMT} a more general mathematical characterization of tameness properties for shapes for data analysis to be well posed was developed. The tameness properties required are abstract conditions on a general class of sets called o-minimal structures \cite{tametopology}.

\begin{defn}
An o-minimal structure $\mathcal{O}= \{\mathcal{O}_d\}$, $d\ge 0$ is a collection of subsets $\mathcal{O}_d$ of $\mathbb{R}^d$ which are closed under intersection and complement. The collection satisfies the following axioms: 
\begin{itemize}
	\item If $A \in \mathcal{O}_d$, then $A\times \mathbb{R}$ and $\mathbb{R}\times A$ are in $\mathcal{O}_{d+1}$.
	\item If $A \in \mathcal{O}_{d+1}$, then $\pi(A)\in \mathcal{O}_d$ where $\pi: \mathbb{R}^{d+1}\to \mathbb{R}^d$ is an axis aligned projection. 
\end{itemize}
In addition, $\mathcal{O}$ contains all semi-algebraic sets and $\mathcal{O}_1$ contains only finite unions of points and open intervals in $\mathbb{R}$. Elements of $\mathcal{O}$ are called definable or tame sets.
\end{defn}

Intuitively, an o-minimal set is akin to a simplicial complex, and the latter is an important example of the former. O-minimal structures are very general and include semi-algebraic, sub-analytic, and piecewise linear sets. We now state a general class of sets based on o-minimal structures for which transforms such as the ECT and PHT have a mathematically rigorous foundation. 

\begin{defn}(Constructible sets)
A constructible set is a compact definable subset of $\mathbb{R}^d$. The set of constructible sets are denoted as CS($\mathbb{R}^d$). 
\end{defn}

Definable sets play the role of measurable sets for an integration theory based on the Euler characteristic in Euler calculus.
The guarantee that definable sets can be measured by the Euler characteristic is by virtue of the following theorem which also
connects directly to both meshes, the digitized representation of a shape, and the more abstract notion of constructible sets.

\begin{thm}[Triangulation Theorem~\cite{tametopology}]\label{thm:triangulation}
Any tame set admits a definable bijection with a subcollection of open simplices in the geometric realization of a finite Euclidean simplicial complex.
Moreover, this bijection can be made to respect a partition of a tame set into tame subsets.
\end{thm}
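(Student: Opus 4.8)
The plan is to derive the statement from the Cell Decomposition Theorem for o-minimal structures together with an induction on the ambient dimension, where $S\subseteq\mathbb{R}^n$ is the tame set to be triangulated and $\{S_i\}_{i\in I}$ the finite partition into tame subsets that the bijection must respect.

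First I would invoke (or, since it is not among the facts recorded above, establish from the o-minimal axioms in the standard way) the Cell Decomposition Theorem: any finite family of definable subsets of $\mathbb{R}^n$ can be refined to a finite decomposition of $\mathbb{R}^n$ into \emph{cells}, defined inductively as points or open intervals in $\mathbb{R}^1$ and, in $\mathbb{R}^{k+1}$, as graphs of definable continuous functions over cells of $\mathbb{R}^k$ or open bands between two such graphs (with $\pm\infty$ allowed); each $d$-dimensional cell is definably homeomorphic to the open cube $(0,1)^d$. Composing first with a definable homeomorphism $\mathbb{R}^n\cong(-1,1)^n$, I may assume $\overline{S}$ is compact; applying the Cell Decomposition Theorem to the family $\{\,\overline{S},\,S\,\}\cup\{S_i\}_{i\in I}$ and keeping only the cells contained in $\overline{S}$ yields a finite cell decomposition $\mathcal{C}$ of $\overline{S}$ adapted to $S$ and to each $S_i$. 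It therefore suffices to produce a finite simplicial complex $K$, realized in some $\mathbb{R}^N$, and a definable homeomorphism $\phi\colon|K|\to\overline{S}$ carrying each cell of $\mathcal{C}$ onto a union of $\phi$-images of open simplices; the open simplices whose $\phi$-images lie in $S$ then form the required subcollection in definable bijection with $S$, and this bijection respects $\{S_i\}$ by construction.

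The induction is on the ambient dimension $n$. For $n\le 1$ there is nothing to prove, since a finite cell decomposition of a bounded interval in $\mathbb{R}^1$ is already a simplicial complex: points are $0$-simplices and open intervals are open $1$-simplices. For the step from $n$ to $n+1$, push $\mathcal{C}$ forward by the coordinate projection $\pi\colon\mathbb{R}^{n+1}\to\mathbb{R}^n$; the images $\pi(C)$ of the cells $C\in\mathcal{C}$ again form a finite cell decomposition $\mathcal{D}$ of a compact subset of $\mathbb{R}^n$, to which the inductive hypothesis applies, giving a finite complex $L$ and a definable homeomorphism $h\colon|L|\to\bigcup\mathcal{D}$ under which every $\pi(C)$ is a union of images of open simplices. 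Over an open simplex $\sigma$ of $L$, the part of $\overline{S}$ lying above $h(\sigma)$ is, by the structure of cells, a stack of graphs of definable continuous functions $f_1<\cdots<f_k$ on $h(\sigma)$ together with the open bands between consecutive graphs. One then refines $L$ so that each $f_j$ extends continuously to the closed simplex $\overline{\sigma}$ — possible because the frontier of each cell of $\mathcal{D}$ is a union of lower-dimensional cells and $h$ already respects this stratification — and builds $K$ together with $\phi\colon|K|\to\overline{S}$ by induction on the dimension of the base simplices: over $\overline{\sigma}$ one joins the points sitting at heights $f_1,\dots,f_k$ above the barycenter of $\sigma$ to the part of $K$ already constructed over $\partial\sigma$, coning to fill in the columns. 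Because the interleaving of the $f_j$ is consistent along common faces, these local cones match up into a genuine finite simplicial complex, and the open simplices over the cells of $\mathcal{C}$ contained in $S$ assemble to the desired subcollection.

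The delicate point, and the heart of the proof, is this last inductive step. One must take the triangulation of the base fine enough, and in general first perform a linear change of coordinates putting the cells of $\mathcal{C}$ in sufficiently general position with respect to the projection $\pi$, so that the finitely many definable functions cutting out the cells above each base simplex are continuous up to its closure and interleave coherently across shared faces; without this the coning constructions over adjacent simplices need not agree and the pieces fail to glue into a simplicial complex. Establishing that such a coordinate choice and such a subdivision can always be arranged — essentially a quantitative form of the facts that definable functions are piecewise continuous and that frontiers of cells decompose into lower cells — is where the real work lies, and it is the reason the Triangulation Theorem is proved by a careful simultaneous induction with the Cell Decomposition Theorem rather than as a quick corollary of it.
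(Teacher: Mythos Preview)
The paper does not supply a proof of this theorem: it is quoted as a known result with a citation to van den Dries's \emph{Tame Topology and O-Minimal Structures}, and the text moves on immediately to discussing its consequences. So there is no ``paper's own proof'' against which to compare your proposal.

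That said, your outline is essentially the standard argument one finds in the cited reference: reduce to a bounded set, apply Cell Decomposition to the given family, project one coordinate away, triangulate the base by induction, and lift the triangulation back up by coning over the graphs that cut out the fibers. You have correctly identified the genuine difficulty, namely arranging that the defining functions extend continuously to closed simplices and interleave coherently across faces so that the local cones glue. Your sketch is honest about not filling in this step; if you were to actually write the proof you would need the ``good directions'' lemma and the simultaneous induction with cell decomposition that van den Dries carries out in Chapter~8. As a proof \emph{proposal} it is sound, but be aware that in the context of this paper the theorem is simply being invoked, not re-proved.
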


Here a definable bijection is a map that is one-to-one and onto and whose graph is a definable set. The partition can be chosen to be the simplices in the geometric realization. The upshot of this choice is that the Triangulation Theorem allows us to define the Euler characteristic of a tame set in terms of an alternating count of the number of simplices used in a definable triangulation:

\begin{defn}
\label{def24}
If $X\in \Omin$ is tame and $h:X\to \cup \sigma_i$ is a definable bijection with a collection of open simplices, then the Euler characteristic of $X$ is
\[
\chi(X):=\sum_i (-1)^{\dim \sigma_{i}}
\]
where $\dim \sigma_i$ denotes the dimension of the open simplex $\sigma_i$.
We understand that $\chi(\varnothing)=0$ since this corresponds to the empty sum. 
\end{defn}
\begin{ex}
Let $M$ be a simplicial surface embedded in $\mathbb{R}^3$, which is a two-dimensional triangulated mesh. According to Definition \ref{def24}, we get a well-known identity for the Euler Characteristic $\chi(M)$ of $M$: 
$$
\chi(M) = \# V(M) - \#E(M) + \#T(M),
$$
where $\# V(M), \#E(M), \#T(M)$ are the number of vertices, edges and triangles of $M$, respectively.
\end{ex}

The Euler characteristic is well defined on any o-minimal structure. The integration theory called Euler calculus is well-defined for constructible functions. For developing the theory, we will move from constructible sets to constructible functions, which are defined as follows:

\begin{defn}
\label{def26}
A constructible function $\phi:X \to \mathbb{Z}$ is an integer-valued function on a definable set $X$ with the property that every level set is definable and only finitely many level sets are non-empty.
The set of constructible functions with domain $X$, denoted $\CF(X)$, is closed under pointwise addition and multiplication, thereby making $\CF(X)$ into a ring.
\end{defn}

We are now able to define the Euler integral which the ECT is based on.
\begin{defn}
\label{def27}
The Euler integral of a constructible function $\phi:X \to \mathbb{Z}$ is the sum of the Euler characteristics of each of its level sets:
\[
\int \phi \, d\chi := \sum_{n=-\infty}^{\infty} n \cdot \chi(\phi^{-1}(n)).
\]
Note that constructibility of $\phi$ implies finitely many of the $\phi^{-1}(n)$ are non-empty. 
\end{defn}

\begin{ex}
Let $M$ again be a simplicial surface embedded in $\mathbb{R}^3$. Denote by $1_M$ the indicator function of $M$, which is a constructible function defined as:
$$
1_M(x)=\begin{cases}
			1, & \text{if $x \in M$}\\
            0, & \text{otherwise.}
		 \end{cases}
$$
By comparing the Definitions \ref{def24} and \ref{def27}, we readily see that the Euler characteristic of the constructible function $1_M$ agrees with that of the definable set $M$.
\end{ex}

We can now define the ECT based on the Euler integral.

\begin{defn}
\label{def29}
The Euler Characteristic Transform takes a constructible function $\phi$ on $\mathbb{R}^d$ and returns a constructible function on 
$S^{d-1}\times \mathbb{R}$ whose value at a direction $v$ and real parameter $t\in\mathbb{R}$ is the Euler integral of the restriction of $\phi$ to the half space $x\cdot v \leq t$:
\[
\ECT: \CF(\mathbb{R}^d) \to \CF(S^{d-1} \times \mathbb{R}) \quad \text{where} \quad 
\ECT(\phi)(v,t):=\int_{x\cdot v \leq t} \phi \, d\chi.
\]
\end{defn}
Definition \ref{def29} can be intuitively explained as follows: We consider a scanning direction $v$ and record the Euler characteristic of the function (e.g., the indicator function of a shape) up to a certain height $t$. The value that the ECT takes at $(v,t)$ is equal to this recorded Euler characteristic. This value can be computed easily by taking an alternating sum of the simplices.

For $M$ a constructible subset, we write $\ECT(M)$ as a shorthand for $\ECT(1_M)$. As $M$ is a constructible set, the indicator function $1_M$ is a constructible function so that this is a well defined map from $\CS(\mathbb{R}^d)$, the set of constructible sets on $\mathbb{R}^d$, to the set $\CF(S^{d-1} \times \mathbb{R})$ of constructible functions on $S^{d-1} \times \mathbb{R}$. When we fix $v\in S^{d-1}$ and let $t\in\mathbb{R}$ vary, we refer to $f_{M}^{v}(t):=\ECT(M)(v,-)$ as the Euler curve in direction $v$.
This allows us to equivalently view the Euler Characteristic Transform for a fixed $M\in \CS(\mathbb{R}^d)$ as a map from the sphere to the space of Euler curves:
\[\ECT(M): S^{d-1} \to \CF(\mathbb{R})  \quad \text{where} \quad 
v \mapsto f_{M}^{v}(t) :=\chi(M\cap \{x \mid x\cdot v \leq t\}).
\]

A very attractive theoretical property of the ECT is that it is an injective map \cite{CMT,RG-ECT-inject}. The following
is Theorem 3.5 in \cite{CMT}.
\begin{thm}[Curry-Mukherjee-Turner]
\label{thm210}
Let $\CS(\mathbb{R}^d)$ be the set of constructible sets.
The map $\ECT: \CS(\mathbb{R}^d) \to \CF(S^{d-1} \times \mathbb{R})$ is injective.
If $M$ and $M'$ are two constructible sets for which one obtains the same Euler curves for each direction then they are the same set:
\[
\ECT(M)=\ECT(M'): S^{d-1} \to \CF(\mathbb{R}) \Longrightarrow M=M'.
\]
\end{thm}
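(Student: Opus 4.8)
The plan is to establish the stronger assertion that $\ECT$ is injective on all of $\CF(\R^d)$, by producing an explicit inversion formula of the type supplied by Schapira's tomography theorem for constructible functions. Observe first that $\ECT$ is the Euler--Radon transform attached to the incidence relation $S=\{\,(x,(v,t))\in\R^d\times(S^{d-1}\times\R)\ :\ \langle x,v\rangle\le t\,\}$, in the sense that $\ECT(\phi)(v,t)=\int_{S_{(v,t)}}\phi\,d\chi$ with $S_{(v,t)}=\{x:\langle x,v\rangle\le t\}$ the closed half-space of Definition \ref{def29}. The first step is to introduce a \emph{dual} incidence relation $S'=\{\,((v,t),y)\in(S^{d-1}\times\R)\times\R^d\ :\ \langle y,v\rangle\ge t\,\}$ together with the associated transform $\mathcal{R}_{S'}:\CF(S^{d-1}\times\R)\to\CF(\R^d)$, $\mathcal{R}_{S'}(\psi)(y)=\int_{\{(v,t):\langle y,v\rangle\ge t\}}\psi\,d\chi$; it then suffices to show that $\mathcal{R}_{S'}\circ\ECT$ recovers the input up to an additive constant that the transform itself determines.

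By the definable Fubini theorem for the Euler characteristic, $\mathcal{R}_{S'}(\ECT(\phi))(y)=\int_{x\in\R^d}\phi(x)\,\chi(Z_{x,y})\,d\chi(x)$ where $Z_{x,y}=\{(v,t)\in S^{d-1}\times\R:\langle x,v\rangle\le t\le\langle y,v\rangle\}$, so the crux of the argument is the evaluation of $\chi(Z_{x,y})$. Projecting $Z_{x,y}$ onto $S^{d-1}$, the fibre over $v$ is the interval $[\langle x,v\rangle,\langle y,v\rangle]$, which is nonempty exactly when $\langle x-y,v\rangle\le 0$. When $x=y$ the image is all of $S^{d-1}$ and every fibre is a single point, so $Z_{x,x}\cong S^{d-1}$ and $\chi(Z_{x,x})=1+(-1)^{d-1}=:\mu$. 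When $x\ne y$ the image is a closed hemisphere (hence of Euler characteristic $1$) and every fibre --- a nondegenerate closed interval over the interior of the hemisphere, a single point over its boundary --- has Euler characteristic $1$, so integrating fibrewise gives $\chi(Z_{x,y})=1=:\lambda$. Since $\mu-\lambda=(-1)^{d-1}\ne 0$, separating the contributions $x=y$ and $x\ne y$ in the integral yields $\mathcal{R}_{S'}(\ECT(\phi))(y)=(-1)^{d-1}\phi(y)+\int_{\R^d}\phi\,d\chi$, hence $\phi(y)=(-1)^{d-1}\bigl(\mathcal{R}_{S'}(\ECT(\phi))(y)-\int_{\R^d}\phi\,d\chi\bigr)$.

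To finish, note that the constant $\int_{\R^d}\phi\,d\chi$ is read off from $\ECT(\phi)$ alone: when $\phi=1_M$ with $M\in\CS(\R^d)$ it equals $\chi(M)=\ECT(M)(v,T)$ for any direction $v$ and any $T>\sup_{x\in M}\langle x,v\rangle$. Thus $\ECT(\phi)$ determines $\phi$ pointwise, and in particular a constructible set $M$, being $1_M^{-1}(1)$, is determined by $\ECT(M)$; so $\ECT(M)=\ECT(M')$ forces $M=M'$, which is the claimed injectivity (and a little more).

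The step I expect to need the most care is the bookkeeping forced by non-compactness: the half-spaces $S_{(v,t)}$ and the sets $Z_{x,y}$ are unbounded, so one must either verify directly that all the Euler integrals above converge --- each is, fibrewise, a bounded constructible function with support bounded below, which makes it well defined --- or, more cleanly, replace $\R^d$ throughout by a large closed ball $\bar{B}_R$ containing every set in play and truncate the range of $t$ accordingly, so that the projections of $S$ and $S'$ become proper and Schapira's inversion theorem applies verbatim; the additive constant then becomes $\chi(M)$ times the indicator of $\bar{B}_R$, which is still explicit. One should also double-check the degenerate fibre case $\langle x-y,v\rangle=0$ (a single point, Euler characteristic $1$, consistent with the interval case) and confirm that the relevant joint functions on $\R^d\times(S^{d-1}\times\R)$ are constructible so that the Euler--Fubini theorem is legitimately invoked.
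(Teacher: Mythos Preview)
Your argument is correct, but it follows a genuinely different route from the paper's. The paper (reproducing the Curry--Mukherjee--Turner proof inside the proof of Theorem~\ref{thm:LECT-injects}) proceeds in two stages: first an inclusion--exclusion identity,
\[
\chi(M\cap W)=\ECT(M)(v,h)+\ECT(M)(-v,-h)-\ECT(M)(v,\infty),
\]
shows that $\ECT(M)$ determines the Euler characteristic of every \emph{hyperplane} slice, i.e.\ determines the classical Euler--Radon transform $\mathcal{R}_S(1_M)$ with $S\subset\mathbb{R}^d\times\AffGr_d$ the point-on-hyperplane incidence; second, Schapira's inversion is applied to this hyperplane transform, where the fibres $S_x\cap S'_x\cong\mathbb{RP}^{d-1}$ and $S_x\cap S'_{x'}\cong\mathbb{RP}^{d-2}$ give $\chi_1-\chi_2=(-1)^{d-1}$.

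You instead treat $\ECT$ itself as a Radon transform for the half-space incidence on $\mathbb{R}^d\times(S^{d-1}\times\mathbb{R})$ and invert it directly with the opposite-half-space kernel, computing the fibre $Z_{x,x}\cong S^{d-1}$ versus $Z_{x,y}\cong$ closed hemisphere for $x\neq y$, which again yields $\chi_1-\chi_2=(-1)^{d-1}$. This is the approach of Ghrist--Levanger--Mai \cite{RG-ECT-inject} rather than \cite{CMT}. Your version is more economical (one Schapira application, no preliminary inclusion--exclusion) and immediately gives injectivity on all of $\CF(\mathbb{R}^d)$; the paper's version has the modest advantage that the intermediate hyperplane transform is the textbook Schapira example and all sets involved are automatically compact once one knows $\chi(M\cap W)$, so the properness caveat you flag does not arise there. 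Your handling of that caveat (restrict to a large ball, or check convergence fibrewise) is appropriate and would need to be carried out carefully in a full write-up.
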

The applied upshot of Theorem \ref{thm210} is that no information is lost by considering the ECT of a mesh instead of the original shape.

The key mathematical idea in proving the above theorem is applying an inversion theorem of Schapira~\cite{Schapira:tom} which regularizes the ill-posed problem of inverting the Radon transform based on tameness as characterized by o-minimal structures. To make the paper self-contained
we state Schapira's inversion formula in Appendix \ref{secA1}.

\subsection{The Lifted Euler Characteristic Transform}
\label{sec:lift}

The mathematical theory presented in the previous section applies to constructible functions, i.e., functions that take finitely many values. This model is well-suited for hard tissue, where the shape has a clear-cut boundary -- a point either belongs to the shape or it does not. However, in other cases, we may be interested in a continuum of values with varying density, which can be better represtend as field data.

In this section, we explain how the ECT approach for analyzing shapes can be extended to field-type data using a lifting argument. This allows us to analyze new types of data while retaining the desirable properties of the ECT. The mathematical model for our fields consists of definable functions $\Def(\mathbb{R}^d)$, and we are particularly interested in maps from a compact subset $X \subset \mathbb{R}^d$ to a compact subset $Y \subset \mathbb{R}$. The transform we introduce is called the Lifted Euler Characteristic Transform (LECT).

From a theoretical perspective, a notable property of the LECT is that all the theory and methodology from Euler calculus based on o-minimal structures transfers from sets to fields. This includes an inversion formula based on Schapira's inversion theorem, which proves the injectivity of the LECT, making it an invertible transform.

A classic idea in studying the topology of random fields was characterizing the superlevel sets of random fields \cite{RandomFields}. Specifically, given a three-dimensional random field one can filter or threshold on the superlevel sets of the field values $Y \geq t$ and compute the Euler characteristic for the thresholded field, $X: Y(X) \geq t$. This analysis tool has been used extensively as a null model for various imaging modalities, including  magnetic resonance imaging (MRI) data \cite{RandomFields}. Our lifting approach is heavily motivated by this idea of extrema of random fields.

The following definition will be central to extending the ECT and Euler calculus from sets to fields. This is similar to how we represent shape data as constructible functions with Definition \ref{def26}. The idea is that given a map 
$f:\mathbb{R}^3 \rightarrow \mathbb{R}$, if the graph of the map is a definable set, then so are the domain and image of $f$.

\begin{defn}
For definable sets $X \subseteq \mathbb{R}^n$ and $Y \subseteq \mathbb{R}^k$ the function $f: X \rightarrow Y$ is definable if the graph of the function 
$f : X \rightarrow Y$ is a definable subset of $\mathbb{R}^n \times  \mathbb{R}^k$. We denote the set of definable functions from $\mathbb{R}^d$ to $\mathbb{R}$ as $\Def(\mathbb{R}^d)$.
\end{defn}

Next we define the Lifted Euler Characteristic Transform. 
\begin{defn}
For $f: \mathbb{R}^d \rightarrow \mathbb{R}$ a definable function, the \emph{lift of the ECT} along $f$ is a map
\begin{equation*}
\LECT: \Def(\mathbb{R}^d) \rightarrow \CF(S^{d-1} \times \mathbb{R} \times \mathbb{R})    
\end{equation*}

defined as
\begin{equation*}
\LECT(f)(v,h,t) = \chi \Big( \{x \in \mathbb{R}^{d} \ \mid \ x \cdot v \le h , \, f(x) = t \} \Big). 
\end{equation*}
\end{defn}

Euler integrals and their use in analyzing continuous type data have been studied previously. One related construction is the weighted Euler Curve Transform \cite{WECT}, which considers a weighted Euler characteristic. Positive weights are associated to the open simplices of a simplicial complex to indicate their importance, making use of the ring structure of the constructible functions. However, a true continuous extension, i.e., an extension to the set of definable functions, seems more evasive. These limits were studied in \cite{definableintegrals}. As noted in that paper, there are obstructions to using it for integral transforms. In particular, such transforms may not be invertible, unlike the lifted transforms presented here as we will show in Subsection \ref{sec:inj}. These ideas have been developed further in a slightly different context in \cite{leb21}, where the authors study combinations of Euler and Lebesgue integrals.

There is also a more subtle reason why one may find the Lifted Transform preferable over weighting. If one sees the field as carrying
geometric information rather than simply weighting simplices, then the differences between two fields should not be the same as the difference of their functions. This is analogous to representing shapes with indicator functions; for example, while two nested convex bodies may be similar, the difference between their indicator functions is a sphere.

Additionally, by lifting the transform we obtain a stratified map. This is a key property for shape reconstruction and has been used for feature selection in applications \cite{SINATRA}. This will be explained more in Subsection \ref{sec:stratification}.

\subsubsection{The Lifted Euler Characteristic Transform is Injective}
\label{sec:inj}

The two main utilities of the ECT are the fact that the map is injective from $\CF(\mathbb{R}^d) \to \CF(S^{d-1} \times \mathbb{R})$ and that the representation of the transformed shape is convenient for downstream statistical analysis. We now prove the LECT is also injective. This follows the proof of Theorem 3.5 in \cite{CMT}.

\begin{thm}\label{thm:LECT-injects} 
Let $\Def(\mathbb{R}^d)$ be the set of definable functions $\mathbb{R}^d \rightarrow \mathbb{R}$.
The map $\LECT: \Def(\mathbb{R}^d) \to \CF(S^{d-1} \times \mathbb{R} \times \mathbb{R})$ is injective.
Equivalently, if $f$ and $g$ are two definable functions that determine the same association of directions and level sets to Euler curves, then they are, in fact, the same function.
Said symbolically:
\[
\LECT(f)=\LECT(g): S^{d-1} \to \CF(\mathbb{R} \times \mathbb{R})  \Longrightarrow  f=g.
\]
\end{thm}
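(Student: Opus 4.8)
The plan is to mimic the proof of Theorem~\ref{thm210} (Curry--Mukherjee--Turner), reducing the injectivity of $\LECT$ to the injectivity of the ordinary $\ECT$ on constructible sets applied to the level-set data of the function. The key observation is that $\LECT(f)(v,h,t)$ is, for each fixed $t$, precisely the Euler curve in direction $v$ of the level set $f^{-1}(t) \subset \mathbb{R}^d$. So knowing $\LECT(f)$ is the same as knowing $\ECT(1_{f^{-1}(t)})$ for every $t \in \mathbb{R}$. Since $f$ is definable, each level set $f^{-1}(t)$ is a definable set, and only finitely many ``combinatorial types'' of level set occur; one has to be slightly careful that $f^{-1}(t)$ need not be compact, but working with a definable $f$ on a fixed compact domain $X$ (as the paper sets up) makes $f^{-1}(t)$ a closed subset of a compact set, hence compact and constructible.

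\medskip

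\noindent\textbf{Step 1.} Fix a direction $v$ and a value $t$. Unwind the definition: $\LECT(f)(v,-,t)$ is the function $h \mapsto \chi(\{x : x\cdot v \le h,\ f(x)=t\}) = \chi(f^{-1}(t) \cap \{x\cdot v \le h\})$, which is exactly the Euler curve $f^{v}_{f^{-1}(t)}$ of the constructible set $f^{-1}(t)$ in direction $v$, in the sense of Definition~\ref{def29} and the discussion after Theorem~\ref{thm210}. Hence the datum $\LECT(f)$, viewed as a map $S^{d-1} \to \CF(\mathbb{R}\times\mathbb{R})$, is equivalent to the datum $t \mapsto \ECT(f^{-1}(t)) \in \CF(S^{d-1}\times\mathbb{R})$.

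\medskip

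\noindent\textbf{Step 2.} Suppose $\LECT(f) = \LECT(g)$. By Step~1, $\ECT(f^{-1}(t)) = \ECT(g^{-1}(t))$ for every $t \in \mathbb{R}$. Apply Theorem~\ref{thm210} (injectivity of $\ECT$ on $\CS(\mathbb{R}^d)$) for each $t$ to conclude $f^{-1}(t) = g^{-1}(t)$ as subsets of $\mathbb{R}^d$.

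\medskip

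\noindent\textbf{Step 3.} A function is determined by its level sets: if $f^{-1}(t) = g^{-1}(t)$ for all $t$, then for every $x$, letting $t = f(x)$ gives $x \in f^{-1}(t) = g^{-1}(t)$, so $g(x) = t = f(x)$; thus $f = g$. This is the trivial ``gluing'' step.

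\medskip

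\noindent\textbf{The main obstacle} is ensuring we stay inside the hypotheses of Theorem~\ref{thm210}, i.e., that each level set $f^{-1}(t)$ is genuinely a \emph{constructible} (compact definable) set, not merely definable. Definability of $f^{-1}(t)$ is immediate from definability of the graph of $f$ (it is the image of the graph under an axis-aligned projection after intersecting with the definable slice $\{y=t\}$). Compactness requires the domain restriction to a compact $X \subset \mathbb{R}^d$ together with $f^{-1}(t)$ being closed in $X$; closedness of level sets is not automatic for arbitrary definable $f$, so one should either assume $f$ is continuous (the natural setting for field data, and consistent with the paper's framing $X \to Y$ with $X,Y$ compact) or invoke a cell-decomposition argument to replace $f^{-1}(t)$ by its closure and check the Euler curves still match. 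A secondary point worth a sentence is that $\LECT(f)$ lands in $\CF(S^{d-1}\times\mathbb{R}\times\mathbb{R})$: one should note that definability of $f$ forces only finitely many level sets $f^{-1}(t)$ to be topologically distinct and the whole assignment $(v,h,t)\mapsto \chi(\cdots)$ to be a constructible function, so the target is correct — but this is bookkeeping, not a real obstacle.
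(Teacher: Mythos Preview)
Your proposal is correct and follows essentially the same approach as the paper: reduce to the observation that $\LECT(f)(v,h,t)=\ECT(f^{-1}(t))(v,h)$, apply the injectivity of $\ECT$ (Theorem~\ref{thm210}) level-set by level-set, and conclude that equal level sets force $f=g$. The only cosmetic difference is that the paper restates the Schapira inversion argument inline rather than invoking Theorem~\ref{thm210} as a black box, and it is less explicit than you are about the compactness/constructibility of $f^{-1}(t)$.
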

\begin{proof}

Let $f$ be a definable function $f: \mathbb{R}^d \rightarrow \mathbb{R}$.
We first prove that the transform determines an arbitrary $t$-level set $M_t$ of $f$. As $f$ definable, its level sets are constructible subsets of $\mathbb{R}^d$. Write $M_t$ for the $t$-level set of $f$, and note that by definition $\LECT(f)(v,h,t)=\ECT(M_t)(v,h)$, so this result follows directly from Theorem 3.5 in \cite{CMT} which we restate below.
Let $W$ be the hyperplane defined by $\{x\cdot v = h\}$ for $x \in M_t$.
By the inclusion-exclusion property of the definable Euler characteristic

\begin{eqnarray*}
\chi(M_t \cap W) &=& \chi(\{ x\in M_t: x\cdot v=h\})\\
& = &\chi(\{ x\in M_t: x\cdot v\leq h\} \cap \{ x\in M_t: x\cdot (-v) \leq -h\} )\\
& = &\chi(\{ x\in M_t: x\cdot v\leq h\}) + \chi( \{ x\in M_t: x\cdot (-v) \leq -h\} ) - \chi(M_t)\\
& = &\ECT(M_t)(v,h) + \ECT(M_t)(-v,-h) - \ECT(M_t)(v)(\infty).
\end{eqnarray*}
This means that from $\ECT(M_t)$ we can deduce the $\chi(M_t \cap W)$ for all hyperplanes $W\in \AffGr_d$.

Let $S$ be the subset of $\mathbb{R}^d \times \AffGr_d$ where $(x,W)\in S$ when $x$ is in the hyperplane $W$. 
For simplicity, we denote the projection to $\mathbb{R}^d$ by $\pi_1$ and the projection to $\AffGr_d$ by $\pi_2$.
For this choice of $S$ the Radon transform of the indicator function $1_{M_t}$ for $M_t \in \Omin_d$ at $W\in \AffGr_d$ is 
\begin{eqnarray*}
(\Rad_{S}1_{M_t})(W)&=& (\pi_2)_* [(\pi_1^*1_{M_t})1_S](W) \\
&=&\int_{(x,W)\in S} (\pi_1^* 1_{M_t}) \, d\chi\\
&=&\int_{x\in M_t \cap W} (\pi_1^* 1_{M_t}) \, d\chi\\
&=&\chi(M_t \cap W).
\end{eqnarray*}
This implies that from $\ECT(M_t)$ we can derive $\cR_\cS(1_{M_t})$.

Similarly let $S'$ be the subset of $\AffGr_d \times \mathbb{R}^d$ where $(W,x)\in S'$ when $x$ is in the hyperplane $W$.
For all $x\in \mathbb{R}^d$, $S_x\cap S'_x$ is the set of hyperplanes that go through $x$ and hence is $S_x\cap S'_x=\mathbb{R} P^{d-1}$ and $\chi(S_x\cap S'_x)=\frac{1}{2}(1+(-1)^{d-1})$.  
For all $x\neq x'\in \mathbb{R}^d$, $S_x\cap S'_{x'}$ is the set of hyperplanes that go through $x$ and $x'$ and hence is $S_x\cap S'_{x'}=\mathbb{R} P^{d-2}$ and $\chi(S_x\cap S'_{x'})=\frac{1}{2}(1+(-1)^{d-2})$.  
Applying Theorem 2.13 in \cite{CMT}
$$(\Rad_{S'}\circ \Rad_{S})(1_{M_t})=(-1)^{d-1} 1_{M_t} + \frac{1}{2}(1+(-1)^{d-2}) \chi(M_t)1_{\mathbb{R}^d}.$$

Note that if $\ECT(M_t)=\ECT(M'_t)$ then $\Rad_{S} 1_{M_t}=\Rad_{S} 1_{M'_t}$, since $\ECT$ determines the Euler characteristic of every slice.
Moreover, if $\ECT(M_t)=\ECT(M'_t)$, then $\chi(M_t)=\chi(M'_t)$, which by inspecting the inversion formula above further implies that $1_{M_t}=1_{M'_t}$ and hence $M_t=M'_t$. 

Applying the above reconstruction result for all $t$ it holds that for two definable functions $f,g$ if $\LECT(f) = \LECT(g)$ then $M_t(g) = M_t(f)$ for all level sets $t$.

Recall that two functions $f$ and $g$ are equal, if their domain and co-domain sets are the same and their output values agree on the whole domain,
or $f = g$ if  $f(x) = g(x)$ for all $x \in X.$ We have already proven for two definable functions $f=g$ that if $\LECT(M(f)) = \LECT(M(g))$ then
$M_t(f) = M_t(f)$ or the graph of the function for all values $f(x)=t$ are equivalent. As the graphs are equivalent the functions are equivalent.
\end{proof}

The above injectivity result can be thought of as an inversion formula for a topological Radon transform of continuous data. Essentially, the Lifted Transform is a topological Radon Transform in codimension 1, where the fiber is trivial, and the Super Lifted Transfom has a contractible fiber.

\subsubsection{Robustness and Statistical Considerations}
\label{sec:robust}

A basic result of Morse theory is that a Morse function gives a handlebody decomposition of a manifold. It only takes one critical point to completely change the topology. The level sets of two fields can look very different even if their level sets are mostly the same. However, this is not true for superlevel sets. From this viewpoint, we define the Super Lifted Euler Characteristic Transform (SELECT) as a robust adaptation of the LECT.

\begin{defn}
For $f: \mathbb{R}^d \rightarrow \mathbb{R}$ a definable function with compact support, the \emph{Super Lifted Euler Characteristic Transform} (SELECT) of $f$ is a map
\begin{equation*}
\SELECT: \Def(\mathbb{R}^d) \rightarrow \CF(S^{d-1} \times \mathbb{R} \times \mathbb{R})    
\end{equation*}

defined as
\begin{equation*}
\SELECT(f)(v,h,t) = \chi \big(\{x \in \mathbb{R}^{d} \ \mid \ x \cdot v \le h , \, f(x) \geq t \} \big). 
\end{equation*}
\end{defn}

The Super Lifted Transform is also injective:
\begin{thm}\label{thm:SELECT-injects}
The map $\SELECT: \Def(\mathbb{R}^d) \to \CF(S^{d-1} \times \mathbb{R} \times \mathbb{R})$ is injective.
\[
\SELECT(f)=\SELECT(g): S^{d-1} \to \CF(\mathbb{R} \times \mathbb{R}) \Longrightarrow  f=g.
\]\end{thm}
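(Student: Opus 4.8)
The plan is to mimic the proof of Theorem \ref{thm:LECT-injects}, with superlevel sets in place of level sets. Fix a definable function $f:\mathbb{R}^d\to\mathbb{R}$ with compact support and, for each $t\in\mathbb{R}$, set $M_{\geq t}:=\{x\in\mathbb{R}^d\mid f(x)\geq t\}$. Since $f$ is definable, $M_{\geq t}=f^{-1}([t,\infty))$ is a definable set; since it is closed and contained in the compact support of $f$, it is compact, hence $M_{\geq t}\in\CS(\mathbb{R}^d)$. Directly from the definition,
\[
\SELECT(f)(v,h,t)=\chi\big(M_{\geq t}\cap\{x\mid x\cdot v\leq h\}\big)=\ECT(M_{\geq t})(v,h),
\]
so fixing $t$ recovers the full Euler Characteristic Transform of the constructible set $M_{\geq t}$.

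First, suppose $\SELECT(f)=\SELECT(g)$. Then for every $t$ the two transforms agree on the slice at height $t$, which by the displayed identity means $\ECT(M_{\geq t}(f))=\ECT(M_{\geq t}(g))$ as maps $S^{d-1}\to\CF(\mathbb{R})$. By Theorem \ref{thm210} (Curry--Mukherjee--Turner) the ECT is injective on $\CS(\mathbb{R}^d)$, so $M_{\geq t}(f)=M_{\geq t}(g)$ for every $t\in\mathbb{R}$. (Empty superlevel sets cause no trouble: the ECT of the empty set is the zero function, which is attained only by the empty set.)

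Next, reconstruct the function from its superlevel sets. For any $x\in\mathbb{R}^d$ one has $f(x)=\sup\{t\in\mathbb{R}\mid x\in M_{\geq t}(f)\}$, and this supremum is attained since $x\in M_{\geq f(x)}(f)$. Because $M_{\geq t}(f)=M_{\geq t}(g)$ for all $t$, the same formula yields $f(x)=g(x)$ for every $x$, i.e.\ $f=g$.

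The reduction to the ECT via superlevel sets and the appeal to Theorem \ref{thm210} are routine once the identity $\SELECT(f)(\cdot,\cdot,t)=\ECT(M_{\geq t})$ is in place; the only point that genuinely uses a hypothesis is the constructibility, and in particular compactness, of each $M_{\geq t}$, which is exactly why SELECT is defined only for definable functions of compact support. I do not expect any serious obstacle beyond this bookkeeping: unlike the proof of Theorem \ref{thm:LECT-injects}, there is no need to reproduce the Radon inversion computation, since it is already packaged inside Theorem \ref{thm210}.
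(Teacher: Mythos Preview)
Your argument is exactly what the paper has in mind: the paper's own proof consists of the single sentence ``As the proof is almost identical to that of Theorem \ref{thm:LECT-injects}, we omit it,'' and your write-up is precisely that adaptation, replacing level sets by superlevel sets and sensibly invoking Theorem \ref{thm210} as a black box rather than rerunning the Radon inversion.

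One small slip worth fixing: your claim that $M_{\geq t}$ is contained in the support of $f$, hence compact, is false for $t\leq 0$. Outside the support $f$ vanishes, so every such point lies in $M_{\geq t}$ whenever $t\leq 0$, and $M_{\geq t}$ is then unbounded. Since Theorem \ref{thm210} is stated only for $\CS(\mathbb{R}^d)$ (compact definable sets), you should either remark that the Schapira inversion underlying it applies to arbitrary definable sets, or restrict to $t>0$ (where your compactness argument is correct) and then recover the values $f(x)\leq 0$ by a separate, easy argument. This is bookkeeping rather than a real obstacle, and the paper's proof of Theorem \ref{thm:LECT-injects} is equally loose on the analogous point, calling all level sets ``constructible'' without verifying compactness.
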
 
\begin{proof}
As the proof is almost identical to that of Theorem \ref{thm:LECT-injects}, we omit it.
\end{proof}

We will use the SELECT to measure the dissimilarity between fields. Given two fields $f,g$ and the transforms
$\tilde{f} = \SELECT(f), \, \tilde{g} = \SELECT(g)$ the distance between $f$ and $g$ is
\begin{equation*}
\mbox{dist}(f,g)_p = \Big( \int_{S^{d-1}} \int_{\mathbb{R}} \int_{\mathbb{R}} \bigm\vert \tilde{f}(v,h,t)-\tilde{g}(v,h,t) \bigm\vert^p \ dt \ dh \ dv \Big)^{1/p}.
\end{equation*}

For shapes, this distance agrees with the ECT distance stated in \cite{CMT}. In this case, the parameter $t$ takes just two values, 0 and 1, and for constructible shapes $X$ and $Y$ and their indicator functions $1_X$ and $1_Y$ we have:

\begin{align*}
\mbox{dist}(1_X,1_Y)_p & = \Big( \int_{S^{d-1}} \int_{\mathbb{R}} \int_{\mathbb{R}} \bigm\vert \tilde{f}(v,h,t)-\tilde{g}(v,h,t) \bigm\vert^p \ dt \ dh \ dv \Big)^{1/p} \\
& = 1 \cdot \Big( \int_{S^{d-1}} \int_{\mathbb{R}} \bigm\vert \tilde{f}(v,h,1)-\tilde{g}(v,h,1) \bigm\vert^p \ dh \ dv \Big)^{1/p} \\
& = \mbox{dist}_{ECT}(X,Y)_p. 
\end{align*}

This distance is also well defined when $t$ takes more general values. In that case, we may alternatively choose to define another distance obtained by first integrating out $t$. This gives an extension of weighted Euler curves.  We will talk more about this distance in Subsection \ref{sec:mar}.

\subsection{Stratifications}
\label{sec:stratification}
One of the appealing properties of the ECT is that the transform is a stratified map, i.e. locally constant in $S^{d-1} \times \mathbb{R}$. This property is very useful for inverse problems such as reconstruction, and was one of the main ingredients in \cite{SINATRA} where it was used for pulling back statistical evidence on the shapes. In this short subsection we show that both the lifted transforms share this property with the ECT. This a direct consequence of the results in \cite{o-min-strat}. The association
\begin{equation*}
\mathbb{R}^d \times \mathbb{R} \supset M \times \mathbb{R} \hookrightarrow X_M, 
\end{equation*}
where $X_M \subset \mathbb{R}^d \times \mathbb{R} \times S^{d-1} \times \mathbb{R} \times \mathbb{R}$,
produces a definable $X_M$ for a definable $M$ and the projection $\pi$ to the last 3 symbols
\begin{equation*}
\pi: \mathbb{R}^d \times \mathbb{R} \times S^{d-1} \times \mathbb{R} \times \mathbb{R} \rightarrow S^{d-1} \times \mathbb{R} \times \mathbb{R}  
\end{equation*}
is a definable map. Figure \ref{Dparabola2} illustrates the stratification on a simple example on two points and a constructible distance metric.

\begin{ex}
\label{exs}
Consider a point cloud $X =\{p_1, \ldots p_n \} \in \mathbb{R}^d$ and let 

\begin{equation*}
f: \mathbb{R}^d \rightarrow \mathbb{R}: x \mapsto -\min_{ i \in [n]} d(x-p_i),
\end{equation*}
for some constructible metric $d$, such as the Euclidean metric. As the minimum of definable functions is definable, we may apply SELECT
to this dataset.

Whenever $x \cdot v \ge h+t$, the 2-dimensional persistence is constant in $h$, and the persistence module in $t$ just records the homology of the \v{C}ech complex associated to point cloud $X$. A simple example of the transform is depicted in Figure \ref{Dparabola2}.

The superlevel set $f^{-1}(0)$ is just $X$ and for $t>0$ the superlevel sets take the form

\begin{equation*}
f^{-1}([0,t]) = \cup_{i=1}^{n} \{ S_i(t) \},
\end{equation*}
where $S_i(t)$ is a ball of radius $t$ centered at $p_i$. The $S_i(t)$'s are constructible sets and hence so is $f^{-1}(t)$. The top dimensional strata of the stratification on $S^{d-1} \times \mathbb{R} \times \mathbb{R}$ is determined by the boolean algebra of the nerve complex of observability of $\mathcal{S}= \{S_1,\ldots, S_n \}$. In the case of the Euclidean norm, the observability function $O_i$ of an individual $S_i$ is given by the Heaviside step function
\begin{equation*}
O_i(h,v,t) = \mathbbm{H}_\theta(h), \quad \theta = \big(\|p\|-t \big) \cos\Big(\measuredangle \Big(v, \frac{p}{\|p\|}\Big)\Big).
\end{equation*}
\end{ex}

\begin{figure}[!htbp]
\includegraphics[width=\linewidth]{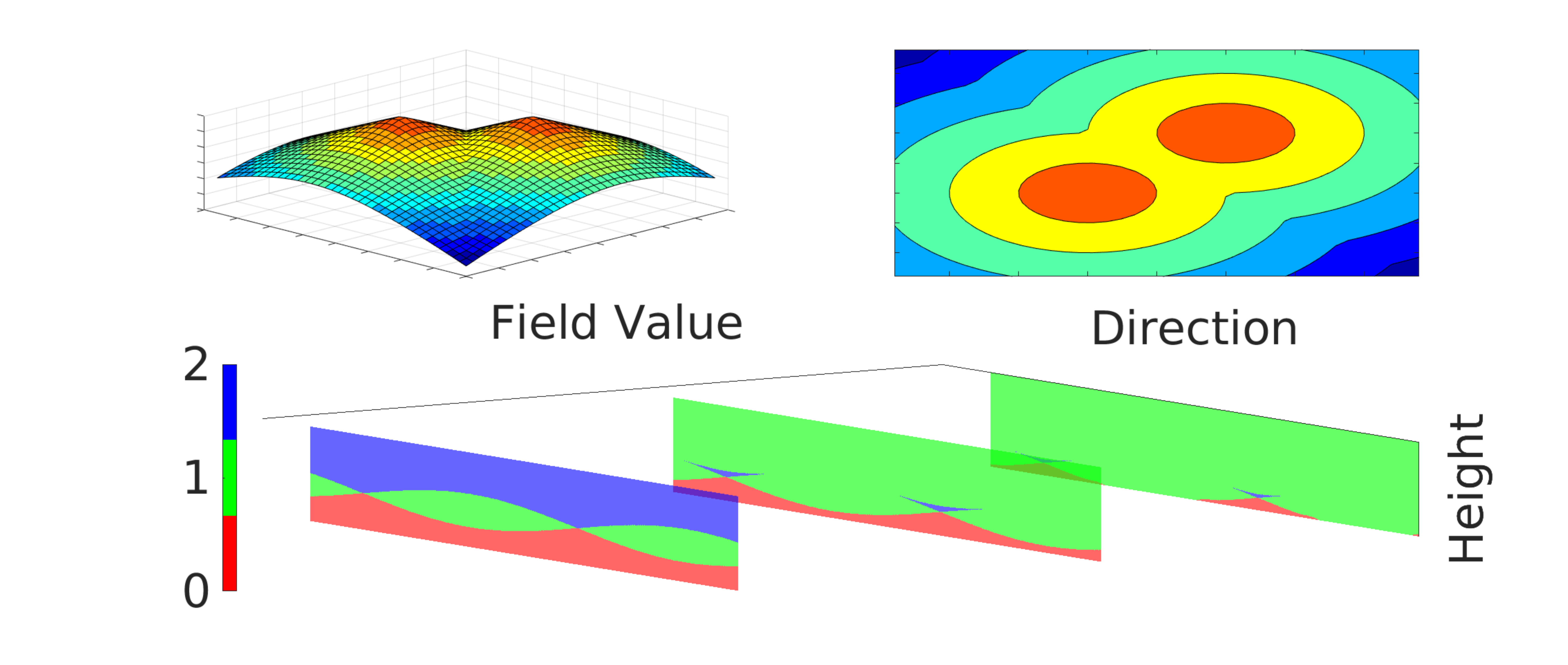}
  \caption{The Super Lifted Euler Characteristic Transform of the Euclidean distance from a point cloud consisting of 2 points, as discussed in Example \ref{exs}. Top Left: Function $f$. Top Right: The associated superlevel sets. Bottom: The lifted transform evaluated at the thresholds, with the stratification on $S^1 \times \mathbb{R} \times \mathbb{R}$ visualized at 3 slices. In terms of the Boolean algebra of the nerve $(S_1,S_2,S_1 \cap S_2)$, the red consitutes a single top stratum $(0,0,0)$, the green 3 top strata $(1,0,0), (0,1,0)$ and $(1,1,1)$, and the blue the top stratum $(1,1,0)$.}\label{Dparabola2}
\end{figure}

\subsection{Marginal distribution of the Super Lifts}
\label{sec:mar}

The Super Lifted Euler Characteristic Transform requires us to keep track of an extra parameter, $t$, in the transform space. For certain purposes, this parameter may be considered a nuisance parameter. In this subsection we show we can get rid of this parameter by integrating it out. This leads to marginal Euler curves. These marginal Euler curves are a continuous extension of the weighted Euler curves in that they agree with the weighted version whenever our data is discrete, but they also give the correct answer when the data is continuous. However, this comes with the price of losing the stratified map structure as the transform is no longer piecewise constant. The marginal Euler curves are defined as follows:

\begin{defn}
Let $f$ be a field $\mathbb{R}^d \rightarrow \mathbb{R}$. The marginal Euler curve $M_\nu^f$ of $f$ in direction $\nu$ is given by
$$
M_{\nu}^{f}(h)=\int_{\mathbb{R}} \SELECT(f)(\nu,h,t) \ dt.
$$

\end{defn}

This marginalization procedure gives us a continuous extension of the weighted Euler curve transform \cite{WECT}. Recall a weighted Euler curve was defined in \cite{WECT} as:
\begin{defn}
Let $K$ be a simplicial complex and $f: K \rightarrow \N$ an admissible weight function: constant on open simplices and $g(\tau)=\textrm{max}(g(\sigma \mid \tau < \sigma))$. The weighted Euler curve of $(K,g)$ in direction $\nu$ is

$$
\chi_{\nu}^{w}(r) = \chi^{w}( K \cap \{ x \cdot \nu \le r, g \},
$$

where

$$
\chi^{w}(K,g)= \sum_{d=0}^{\dim (K)} (-1)^{d} \sum_{\sigma \in K, \mid \sigma \mid=d} g(\sigma).
$$

\end{defn}

\begin{prop}
\label{prop:weighted}
Let $(K,g)$ be a weighted simplicial complex. The weighted Euler curves of $g$ coincide with the marginal Euler curves obtained from the Super Lifted ECT.
\end{prop}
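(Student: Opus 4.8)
The plan is to realize the weighted simplicial complex $(K,g)$ as a definable field and then evaluate its marginal Euler curve by a simplex-by-simplex computation, matching the result against the combinatorial formula defining $\chi^w$. Fix a geometric realization of $K$ in $\mathbb{R}^d$ and define $f\colon\mathbb{R}^d\to\mathbb{R}$ by $f(x)=g(\sigma)$ when $x$ lies in the open simplex $\sigma^\circ$ of $K$, and $f(x)=0$ for $x\notin|K|$. The open simplices partition $|K|$ into finitely many definable pieces and $|K|$ is compact, so $f$ is definable with compact support and $\SELECT(f)$ is defined. The first observation is the shape of the superlevel sets: each point of $|K|$ lies in a unique open simplex, so $\{f\ge t\}=\bigsqcup_{\sigma:\,g(\sigma)\ge t}\sigma^\circ$, a disjoint union; the admissibility hypothesis $g(\tau)=\max_{\sigma>\tau}g(\sigma)$ makes this family of simplices closed under faces, so $\{f\ge t\}$ is a closed subcomplex $|K_{\ge t}|$ — although the computation below does not actually use this closure.

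Next I would compute $\SELECT(f)(\nu,h,t)=\chi\big(\{f\ge t\}\cap\{x\cdot\nu\le h\}\big)$ using additivity of the definable Euler characteristic over the open-simplex partition, which gives $\sum_{\sigma:\,g(\sigma)\ge t}c^{\nu,h}_\sigma$ with $c^{\nu,h}_\sigma:=\chi\big(\sigma^\circ\cap\{x\cdot\nu\le h\}\big)$. The key lemma is that $c^{\nu,h}_\sigma=(-1)^{\dim\sigma}$ when the closed simplex $\bar\sigma$ lies in the half-space $\{x\cdot\nu\le h\}$ (equivalently, all vertices of $\sigma$ have height $\le h$), and $c^{\nu,h}_\sigma=0$ otherwise: if the hyperplane $\{x\cdot\nu=h\}$ cuts through $\sigma^\circ$, then $\sigma^\circ$ decomposes into its parts strictly below, on, and strictly above the hyperplane — two relatively open cells of dimension $\dim\sigma$ and one of dimension $\dim\sigma-1$ — so additivity together with $\chi(\sigma^\circ)=(-1)^{\dim\sigma}$ (Definition \ref{def24}) forces $\chi(\sigma^\circ\cap\{x\cdot\nu\le h\})=0$; if the hyperplane misses $\sigma^\circ$ the truncation is all of $\sigma^\circ$ or empty, according to the side on which $\bar\sigma$ lies. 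Interchanging the finite sum with the $t$-integral — whose effective range is $(0,\infty)$ because $g$ is $\mathbb{N}$-valued, just as for indicator functions of shapes the $t$-integral collapses to the value at $t=1$ (Subsection \ref{sec:robust}) — yields
\[
M_\nu^f(h)\;=\;\sum_{\sigma\in K}c^{\nu,h}_\sigma\int_0^\infty\mathbbm{1}[\,g(\sigma)\ge t\,]\,dt\;=\;\sum_{\sigma\in K}g(\sigma)\,c^{\nu,h}_\sigma\;=\;\sum_{\substack{\sigma\in K\\ \bar\sigma\subseteq\{x\cdot\nu\le h\}}}(-1)^{\dim\sigma}\,g(\sigma).
\]
The final step is to recognize the last sum: the restriction $K\cap\{x\cdot\nu\le h\}$ is precisely the induced subcomplex on the vertices of height $\le h$, and the displayed sum is the definition of its weighted Euler characteristic with weight $g$. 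Since this holds for all $\nu$ and $h$, the marginal Euler curves of $f$ agree with the weighted Euler curves of $g$.

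I expect the two points needing care to be the cut-simplex lemma and the reconciliation of conventions. The lemma $c^{\nu,h}_\sigma\in\{0,(-1)^{\dim\sigma}\}$ is the only genuinely topological ingredient, and it must be carried out with the Euler-calculus (combinatorial, compactly supported) Euler characteristic rather than the ordinary one; one checks the nonempty cases — the hyperplane meeting $\sigma^\circ$ in its interior, and $\bar\sigma$ lying entirely on one side or the other — separately, using additivity and the normalization $\chi(\text{open }k\text{-cell})=(-1)^k$. On the combinatorial side one must pin down that ``$K\cap\{x\cdot\nu\le r\}$'' in the definition of the weighted Euler curve denotes the induced subcomplex on the low-height vertices, and that the marginal integral is effectively over $t>0$; the admissibility of $g$ then plays the clarifying but computationally inessential role of making every superlevel set an honest closed subcomplex.
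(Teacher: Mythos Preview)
Your argument is correct and takes a genuinely different route from the paper's. The paper first invokes a reduction step---that the height-sublevel set $|K|\cap\{x\cdot\nu\le h\}$ is definably identified with the lower-star subcomplex---and then, working on that subcomplex, computes the $t$-integral by partitioning $(0,\infty)$ at the finitely many values $a_1<\cdots<a_n$ that $g$ takes and telescoping $\sum_{i}(a_i-a_{i-1})\sum_{k\ge i}\chi(f^{-1}(a_k))$ to $\sum_i a_i\,\chi(f^{-1}(a_i))$. You bypass the reduction entirely: the open-simplex decomposition together with your cut lemma $\chi(\sigma^\circ\cap\{x\cdot\nu\le h\})\in\{0,(-1)^{\dim\sigma}\}$ handles the height filtration directly, and the $t$-integral collapses via the layer-cake identity $\int_0^\infty\mathbbm{1}[g(\sigma)\ge t]\,dt=g(\sigma)$ applied simplex by simplex. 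What your route buys is that the lower-star identification is never needed (as you note, admissibility plays no computational role), and the argument is uniform across all $(\nu,h)$ rather than passing through a combinatorial subcomplex that depends on $(\nu,h)$. What the paper's route buys is that the telescoping step is the standard Euler-calculus computation relating superlevel and level Euler characteristics of a step function, so it connects more transparently to the Baryshnikov--Ghrist definable-integral framework; and once one has reduced to a subcomplex, no geometric lemma about truncated open simplices is required. Both land on the same closed formula $\sum_{\bar\sigma\subseteq\{x\cdot\nu\le h\}}(-1)^{\dim\sigma}g(\sigma)$.
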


\begin{proof}
At any direction height pair $(v,h)$ the sublevel set restricted to $(v,h)$ is definably homeomorphic to the lower star at $(v,h)$, which is a weighted simplicial complex. As $f$ is constant on open simplices, it suffices to show that the weighted Euler characteristic agrees with the marginalized Euler characteristic for any subcomplex of $K$. As $K$ is finite, function $f$ takes finitely many values $a_i$. Let $0 < a_1 < \ldots < a_n$.

Now observe that for $a_{i-1} < t \le a_{i}$, we have
$$
\chi(f(x) \ge t) = \chi \big( \bigcup_{k=i}^{n} (f(x)=a_k) \big) = \sum_{k=i}^{n} \chi(f(x)=a_k).
$$

We get a telescoping series

\begin{align*}
\int_{\mathbb{R}} \chi(f(x) \ge t) \ dt &  = \sum_{i=2}^{n} \int_{a_{i-1}}^{a_i} \chi(f(x)\ge t) \ dt + \int_{0}^{a_i} \chi(f(x) \ge t) \ dt \\
& = \sum_{i=2}^{n} \Big( \big(a_i-a_{i-1} \big) \sum_{k=i}^{n} \chi(f(x)=a_k) \Big) + a_i \sum_{i=1}^{n} \chi(f(x) = a_i) \\
& = \sum_{i=1}^{n} a_i \chi(f(x)=a_i).
\end{align*}

which agrees with the weighted Euler characteristic from \cite{WECT}.
\end{proof}

%%%%%%%%%%%%%%%%%%%%%%%
%% Equivariant Analysis
%%%%%%%%%%%%%%%%%%%%%%%

\subsection{Aligning fields with the lifted transforms}

A standard assumption in geometric morphometrics is that a shape does not change if one rotates or translates the shape. An initial step in almost all shape analysis is to register the shapes, that is, place them in the same position. The precise mathematical definition of the same position is not always agreed upon. The most common approach to registering shapes is to align landmarks---points on the shape that share a correspondence with points on every other shape. The problem of registration reduces to finding the rotations/reflections that best aligns the landmarks on all the shapes. In general, aligning or registering shapes is a challenging problem, see \cite{Puente} for an effective correspondence based approach.

One of the advantages of the ECT framework is that we require no correspondences between the shapes. A theoretical result in \cite{CMT} suggested that one may be able to use the ECT approach to align shapes without computing correspondences. The relevant result is, given the Lesbesgue measure $\mu$ over $S^{d-1}$, then the pushforward measure $\ECT_{*}(\mu)$ is invariant of $O(d)$ and suggests one can try and learn rotations and reflections that optimally align Euler curves. In \cite{Wai, SINATRA} algorithms were designed to align shapes in geometric morphometrics and proteins from molecular dynamic simulations, respectively.

In this subsection we describe a framework for comparing fields with the lifted transform without prior registration. This is achieved by the equivariance property: A rotation (or reflection) of the transform of a field is the same as the transform of a rotated/reflected field. This means that we can register the transforms instead of registering the fields. For shapes, this equivariance property of the ECT has been used in \cite{SINATRA} to compare shapes without prior registration. Using the lifted transforms, the same construction is possible for fields as the lifted transforms are
also $O(d)$-equivariant. Expressed mathematically, the equivariance property means that for $\phi$ an $O(d)$-action, the following diagram commutes:

\[
\begin{tikzcd}
\Def(\mathbb{R}^d) \arrow[r, "\phi"] \arrow[d, "\SELECT"]
& \Def(\mathbb{R}^d) \arrow[d, "\SELECT" ] \\
\CF(S^{d-1} \times \mathbb{R} \times \mathbb{R}) \arrow[r, "\phi"]
& \CF(S^{d-1} \times \mathbb{R} \times \mathbb{R})
\end{tikzcd}
\]

An $O(d)$-action $\phi$ operates on $\mathbb{R}^d$, so the dual action $\phi_{*}$ of $\phi$ on $\Def(\mathbb{R}^d)$ is defined through its inverse $\phi^{-1}$ via

\[
\begin{tikzcd}
\mathbb{R}^d \arrow[r, "f"] \arrow[d, "\phi"] & \mathbb{R} \\ 
\mathbb{R}^{d} \arrow[ru, "\phi_{*}f"']
\end{tikzcd}
\]

in other words

\begin{equation*}
\phi_{*} (f)(x)=f(\phi^{-1}(x)).
\end{equation*}

This is a composition of two definable functions and hence definable.

Similarly on $\CF(S^{d-1} \times \mathbb{R} \times \mathbb{R})$, the group $O(d)$ acts on $S^{d-1}$, and the action on constructible functions is given by

\begin{equation*}
\phi_{*}(f)(v,h,t)=f(\phi^{-1}v,h,t).
\end{equation*}

\begin{prop}
The  Lifted and the Super Lifted Euler Characteristic transforms are $O(d)$-equivariant.
\end{prop}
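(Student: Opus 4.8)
The plan is to verify commutativity of the defining square pointwise, reducing everything to two elementary facts: an orthogonal transformation equals its own inverse-transpose, and the Euler characteristic is invariant under definable bijections. Since the computation for $\SELECT$ and for $\LECT$ differs only in replacing the superlevel condition $f(x)\ge t$ by the level condition $f(x)=t$, I would carry it out once for $\SELECT$ and then remark that the $\LECT$ case is word-for-word identical.

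Fix $\phi\in O(d)$ and a definable $f\colon\mathbb{R}^d\to\mathbb{R}$. First I would expand $\SELECT(\phi_*f)$ directly from the definitions: for $(v,h,t)\in S^{d-1}\times\mathbb{R}\times\mathbb{R}$,
\[
\SELECT(\phi_*f)(v,h,t)=\chi\bigl(\{x\in\mathbb{R}^d \mid x\cdot v\le h,\ f(\phi^{-1}x)\ge t\}\bigr).
\]
Next I would substitute $x=\phi(y)$. Because $\phi$ is orthogonal, $\phi^{-1}=\phi^{\top}$, so $\phi(y)\cdot v = y\cdot\phi^{\top}v = y\cdot\phi^{-1}v$; hence the set above is exactly the image under $\phi$ of
\[
A:=\{y\in\mathbb{R}^d \mid y\cdot\phi^{-1}v\le h,\ f(y)\ge t\}.
\]
The set $A$ is constructible---it is the intersection of a half-space with the superlevel set $\{f\ge t\}$, both definable, and it is bounded since $f$ has compact support---and $\phi$ restricts to a definable bijection $A\to\phi(A)$, whose graph is a definable subset of $\mathbb{R}^d\times\mathbb{R}^d$. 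Since the Euler characteristic is computed through any definable triangulation (Definition \ref{def24}, which rests on the Triangulation Theorem \ref{thm:triangulation}), a definable bijection preserves it, so $\chi(\phi(A))=\chi(A)$. Therefore
\[
\SELECT(\phi_*f)(v,h,t)=\chi(A)=\SELECT(f)(\phi^{-1}v,h,t)=\bigl(\phi_*\SELECT(f)\bigr)(v,h,t),
\]
which is precisely the commutativity claimed by the square. I would also record that $\phi_*f=f\circ\phi^{-1}$ is definable, being a composite of definable maps (as already noted in the text), so all the objects above lie in the stated domains and codomains.

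I do not anticipate a genuine obstacle. The two points that need a moment's care are: (i) the argument must cover all of $O(d)$, reflections included, but the identity $\phi^{-1}=\phi^{\top}$ holds there too, so the direction bookkeeping $v\mapsto\phi^{-1}v$ is unaffected; and (ii) the change of variables must be presented as an honest definable bijection between constructible sets, so that the simplicial definition of $\chi$ applies directly, without invoking any analytic or measure-theoretic change-of-variables formula. For $\LECT$ one replaces $\{f\ge t\}$ by the level set $\{f=t\}$ throughout; that level set is already constructible by the discussion preceding Theorem \ref{thm:LECT-injects}, so no compact-support hypothesis is needed in that case.
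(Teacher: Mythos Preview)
Your proposal is correct and follows essentially the same route as the paper: expand the definition, substitute $y=\phi^{-1}x$, use $\langle\phi y,v\rangle=\langle y,\phi^{-1}v\rangle$ for orthogonal $\phi$, and conclude. You are simply more explicit than the paper about why $\chi(\phi(A))=\chi(A)$ (invoking the Triangulation Theorem and definable bijections) and about the reflections case, whereas the paper leaves these implicit; the added care is fine, though the boundedness remark is not strictly needed since the o-minimal Euler characteristic is defined for any tame set.
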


\begin{proof}
Let $f \in \Def(\mathbb{R}^d)$ and $\phi \in O(d)$. Now
\begin{align*}
\SELECT (\phi_{*} f)(v,h,t) & = \chi \big( \{ x \in \mathbb{R}^d \ \mid \ \langle x, v \rangle \le h, f(\phi^{-1}x) \ge t \} \big) & \mid\mid \ y=\phi^{-1}x \\
& = \chi \big( \{ y \in \mathbb{R}^d \ \mid \ \langle \phi y, v \rangle \le h, f(y) \ge t \} \big) \\
& = \chi \big( \{ y \in \mathbb{R}^d \ \mid \ \langle y, \phi^{-1}v \rangle \le h, f(y) \ge t \} \big) \\
& = \SELECT (f)(\phi^{-1}v,h,t) \\
& = \phi_{*} \SELECT (f)(v,h,t).
\end{align*}
The same reasoning applies to the lifted transform.
\end{proof}

%%%%%%%%%%%%%%%%%%%%%%%%%%%%
%% Equivariant Analysis ends
%%%%%%%%%%%%%%%%%%%%%%%%%%%%

%%%%%%%%%%%%%%%%%%%%%%%%%%%%
%A short demonstration of the equivariance principle
%%%%%%%%%%%%%%%%%%%%%%%%%%%%
\begin{ex}
\label{extraexample}
The equivariant analysis is straightforward to implement for two-dimensional fields. For $n$ scanning direction $V=\{v_1, \ldots, v_n \}$ picked uniformly on the sphere $S^1$, a cyclic permutation $p_j: [n] \rightarrow [n]: i \mapsto (i+j) \textrm{ mod}(n)$ induces an action on the Lie group $SO(2)$, the group of planar rotations, that sends direction $v$ to $v \cdot \big( \cos( \frac{2 \pi j}{n}), \sin( \frac{2 \pi j}{n}) \big)$. The directions $V$ form an orbit under the cyclic group, so that permutations of the scanning directions correspond to alignment up to the discretization error.

One can then align the fields by simply minimizing the SELECT distance as a function of cyclic permutations on the directions. Thanks to the stratification, the resulting distance function $\phi \mapsto \d_{SELECT}\big(F_1,\phi_{*} F_2\big)$ is differentiable almost everywhere. See Figure \ref{extra1} for an illustration.

\begin{figure}[!htbp]
\includegraphics[width=\linewidth]{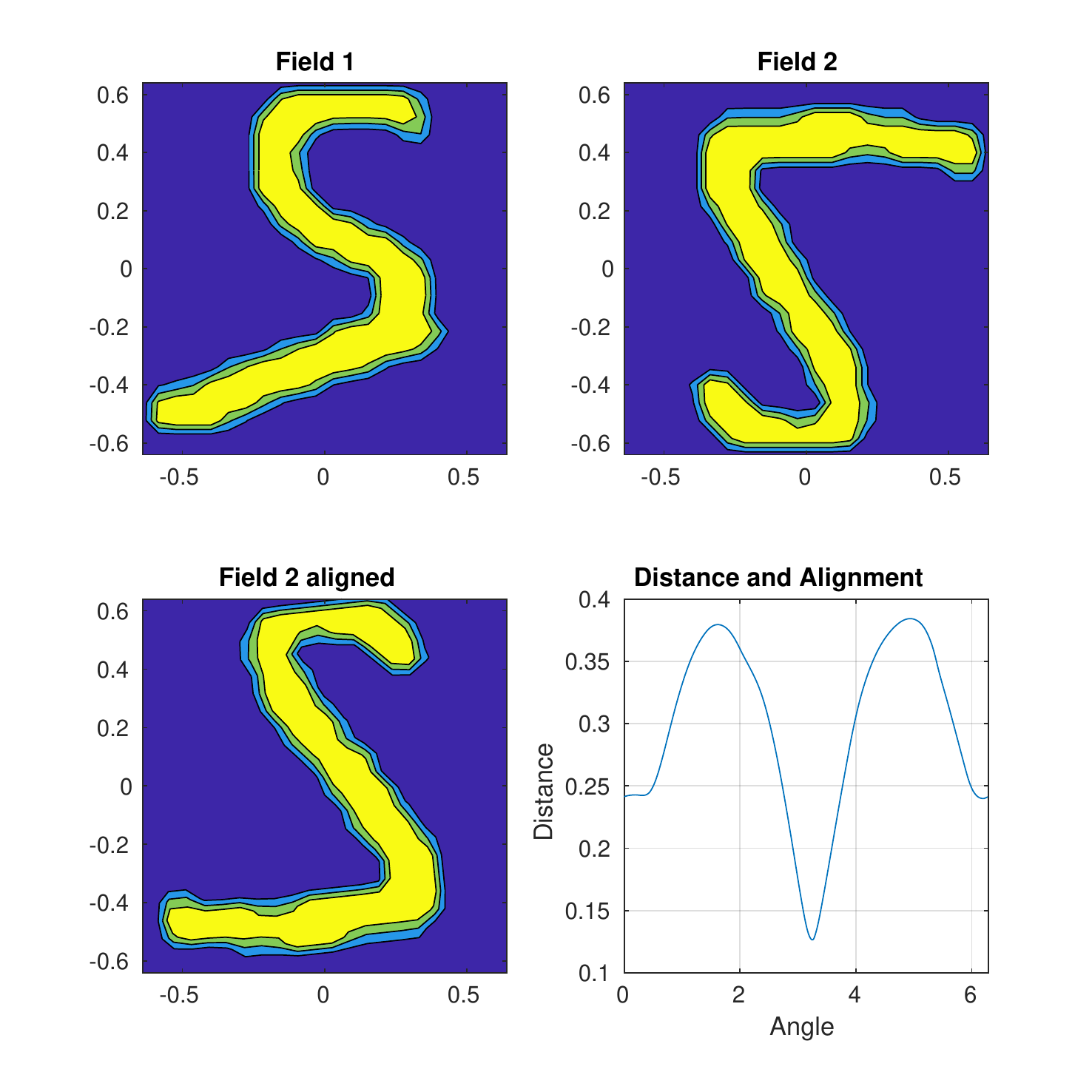}
\caption{A simple 2D-example of aligning 2 fields. Fields 1 and 2 both look like number 2, but Field 2 has the tail upside down. The alignment is obtained by minimizing the distance function with respect to the alignment angle.} \label{extra1}
\end{figure}
\end{ex}

%%%%%%%%%%
%SECTION 3
%%%%%%%%%%

\section{How many directions determine a field - a Moduli space for the lifted transform}
\label{sec:howmany}

In practice one cannot consider all directions in evaluating the Super Lifted Euler Characteristic Transform.
In this section we define classes of functions for which with a finite number of directions we can
distinguish any two functions in the class. This is interesting as the classes of functions we consider are infinite and uncountable
yet there is a finite-dimensional representation that can distinguish any two functions in the class.

For the Euler characteristic transform the proof of a finite bound on the number of directions that determine a shape  \cite{CMT} required restrictions on the shape space. In particular, the family of shapes considered were embedded geometric simplicial complexes 
$K\subset \mathbb{R}^d$. Embedded simplicial complexes have two very attractive properties: they are the standard digital representation
of a shape or surface and they have a great deal of geometric structure that allows for the quantification and verification of the
complexity of a shape.

Similarly, we also restrict the family of fields we consider to ensure sufficient regularity conditions such that we can provide an upper bound on the number of directions. In our case we will consider piecewise linear fields or functions. A piecewise linear function $f: \mathbb{R}^d \rightarrow \mathbb{R}$ is continuous, supported on a compact geometric simplicial complex, and the value of $f$ on faces of the simplicial complex 
is determined by linear interpolation of the vertices that span the face. Piecewise linear functions have two appealing properties.
First, piecewise linear functions are a very common digital representation of functions used in graphics and imaging. Second, piecewise linear
functions provide a simple and easy to verify set of criteria to determine the complexity of the field. The main result in this section
is that for a particular moduli space of piecewise linear functions we can upper bound the number of directions. 

The reason piecewise linear functions are a common digital representation is that one only needs to know the vertices and how they are connected to determine the field. Therefore, one can encode the function via a simplicial complex $K$ with the function values evaluated at the
vertices of the complex. Note that triangulations of a piecewise linear function are not unique and in this paper we consider different
triangulations of a piecewise linear function as equivalent or we consider fields the same if their graphs in $\mathbb{R}^d \times \mathbb{R}$ are the same.
We call the vertices of a triangulation the \emph{knots} of $f$ and the edges simply \emph{edges}. The moduli space of functions will
depend on properties of knots and edges, and defining the moduli space will require specifying what it means for an edge to be observable.

Our definition of an edge being observable will use the fact that the lifted transform can only change its value as it passes knots or edges of the field.

\begin{lem}
\label{lem:projection_is_linear}
Let $f: \mathbb{R}^d \rightarrow \mathbb{R}$ be a piecewise linear definable function, and $\nu$ a direction on $S^{d-1}$. For a fixed function value filtration $t$, the Super Lifted Euler curve of $f$ along $\nu$ can only change its value as $h$ passes a knot or an edge. The statement also holds for a fixed height $h$ as a function of $t$.
\end{lem}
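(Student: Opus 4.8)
The plan is to argue that the Super Lifted Euler curve $h \mapsto \SELECT(f)(\nu, h, t)$ is locally constant away from a finite set of critical heights, where the critical heights are exactly the values $\langle \nu, x \rangle$ as $x$ ranges over knots together with the (finitely many) heights at which the hyperplane $\{x \cdot \nu = h\}$ becomes tangent to an edge in a way that changes the topology of the slice. Concretely, I would first fix $t$ and pass to the superlevel set $A_t := \{x : f(x) \ge t\}$, which is a compact definable (in fact polyhedral) subset of the simplicial complex $K$ on which $f$ is supported; then $\SELECT(f)(\nu,h,t) = \chi(A_t \cap \{x\cdot \nu \le h\})$, so the lemma reduces to a statement about how the Euler characteristic of a sublevel filtration of $A_t$ by the linear height function $h_\nu = \langle \nu, \cdot \rangle$ changes.

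The key step is a piecewise-linear Morse theory argument. Restrict $h_\nu$ to $A_t$. Because $A_t$ is built from (pieces of) simplices of $K$, the restriction of $h_\nu$ to each such piece is affine, hence the only values of $h$ at which the homotopy type of the sublevel set $\{h_\nu \le h\} \cap A_t$ can change are the images under $h_\nu$ of the vertices of a triangulation of $A_t$. A triangulation of $A_t$ can be taken to have vertices among the knots of $f$ together with points lying on the edges of $K$ (the intersections of $\{f = t\}$ with the edges, which move along the edges as $t$ varies but, for fixed $t$, are finitely many determined points on edges). So every critical height is either $\langle \nu, v\rangle$ for a knot $v$, or $\langle \nu, p\rangle$ for a point $p$ on an edge — which is exactly what the statement "as $h$ passes a knot or an edge" means. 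Between consecutive critical heights, the sublevel set deformation retracts onto the previous one, so $\chi$ is unchanged; this uses the standard fact (available since everything is definable, via the Triangulation Theorem~\ref{thm:triangulation}) that a definable family of compact sets over an interval with no critical behavior is definably trivial, hence has constant Euler characteristic.

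For the symmetric statement — fixing $h$ and varying $t$ — I would run the same argument with the roles reversed: fix the half-space $H_h := \{x \cdot \nu \le h\}$ and consider $t \mapsto \chi\big((K \cap H_h) \cap \{f \ge t\}\big)$, i.e. the superlevel filtration of the piecewise-linear function $f$ restricted to the polytope $K \cap H_h$. Since $f$ is affine on each simplex of $K$ and on each piece of $K \cap H_h$ (the extra cut by $\{x\cdot\nu = h\}$ only subdivides into smaller polytopes on which $f$ is still affine), the same PL-Morse-theory reasoning shows the Euler characteristic is constant on $t$-intervals between consecutive critical values, and the critical values of $f$ on $K \cap H_h$ occur precisely at the $f$-values of vertices of a triangulation, i.e. at knots or at points where the slicing hyperplane meets an edge.

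The main obstacle I anticipate is bookkeeping rather than conceptual: one must be careful that "passing an edge" is the correct and complete description of the non-knot critical events, i.e. that no new critical heights are introduced by the interaction between the linear functional $h_\nu$ and the level-set cut $\{f = t\}$ beyond those already accounted for by the vertices of a compatible triangulation. The clean way to handle this is to invoke the refinement part of the Triangulation Theorem~\ref{thm:triangulation} once, to produce a single finite simplicial complex triangulating $K$ and compatible simultaneously with the hyperplane $\{x \cdot \nu = h\}$ and the level set $\{f = t\}$; then all critical heights are vertex heights of this complex, and every such vertex is either a knot of $f$ or lies on an edge of the original triangulation, giving the statement. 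I would also note explicitly that the finiteness of the set of critical heights — which is what makes "can only change as $h$ passes a knot or an edge" a meaningful (finitary) claim — follows from compactness of the support of $f$ together with definability, so the transform is a genuine step function in each of the two arguments.
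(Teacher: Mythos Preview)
Your proposal is correct and rests on the same PL Morse-theoretic idea as the paper, but the packaging differs. The paper's proof considers the single map $x \mapsto (\nu\cdot x, f(x))$, which is affine on each simplex and sends each simplex to a polygon in $\mathbb{R}^2$; fixing $t$ (resp.\ $h$) then corresponds to slicing with a horizontal (resp.\ vertical) halfplane, so both directions of the lemma are handled simultaneously. The resulting set in $\mathbb{R}^d$ is piecewise linear with vertices that are either original knots or intersections of edges with the slicing hyperplane, and the paper invokes Bestvina--Brady \cite{BB-Morse} (their Lemma~2.3) for the fact that the homotopy type of a sublevel set changes only at vertices. Your route---working directly with $A_t$ in $\mathbb{R}^d$, producing a compatible triangulation via Theorem~\ref{thm:triangulation}, and appealing to definable triviality between critical values---reaches the same conclusion but treats the $h$- and $t$-variables separately and leans on o-minimal machinery rather than the Bestvina--Brady lemma. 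The paper's projection to $\mathbb{R}^2$ is more economical and unifies the two cases; your version is more explicit about why every non-knot critical event lies on an edge and makes the definability hypotheses do the work.
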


This is a straightforward consequence of a result in \cite{BB-Morse} that says a Morse function on an affine cell complex can change its homotopy type only when passing through a vertex.

\begin{proof}
The map $x \mapsto (\nu \cdot x,f(x))$ is linear on a simplex since $f$ is linear. Any simplex then maps to a polygon whose boundary is given by the knots and edges of the simplex. Any vertical or horizontal halfplane cuts these polygons, creating a piecewise linear set whose vertices are either vertices of the original simplices, or intersections of edges of simplices with the halfplane. This is a piecewise linear set so Bestvina's result \cite{BB-Morse} (Lem 2.3) on homotopy to lower star sets applies.
\end{proof}

Lemma \ref{lem:projection_is_linear} provides criteria which allow for the definition of an edge to be observable. 

\begin{defn}
Let $e$ be a non-constant edge of a piecewise linear function $f$. We say edge $e$ is \define{observable} in direction $v$ and function value $t$ if the Euler curve of the superlevel set $f^{-1}\big([t, \infty)\big)$ changes at $x \cdot v, \ x \in e \cap f^{-1}(t)$.

We say the edge is \define{$\delta_k \times \delta_B$ observable} at $(v,t)$ if it is observable for any $(\nu, \tau) \in \big(B(v,\delta_k) \times B(t, \delta_B) \big)$.
\end{defn}

The reason we define observability both at a point as well as in a neighborhood is that we will require the stability induced by
having an observable neighborhood to define the moduli space of functions.

We are now ready to define a moduli space of functions:
\begin{defn}
\label{maindef3}
A function $f$ is in the \define{ class $\mathcal{K}(d,k,\delta_k,\delta_B)$ of functions} if it satisfies the following conditions:

\begin{enumerate}
\item \label{conPL} The function $f: \mathbb{R}^d \rightarrow \mathbb{R}$ is  continuous and supported on a compact geometric simplicial complex. The value of $f$ on the faces are a linear interpolation of the values at vertices that span the face. Without loss of generality, $f$ takes values in $[0,1]$.
\item \label{condeltaB} If $x$ and $y$ are two connected knots of $f$, the difference $|f(x)-f(y)|$ is at least $3 \delta_B$.
\item \label{condeltak} For any edge $e$, there exists a restriction $e_t := e \cap f^{-1}([t,\infty))$ and a ball of directions $B(v_0,\delta_k)$ such that $e_t$ is observable in any direction $v \in B(v_0,\delta_k)$.
\item \label{condeltall} For any pair $(v,t)$ of a direction and a function value there are at most $k$ jumps in any Euler curve restricted to $(v,t)$.
\end{enumerate}

\end{defn}

Condition (\ref{conPL}) states that the fields are linear and continuous. This is especially relevant for analyzing pixel data, such as images from MRI scans. The values at the knots are the pixel values, and the field is obtained by convolving the pixel signal with a rectangle size of the pixel. With this convolution we avoid the artificial pathologies introduced by working with cubical complexes, such as the failure of the Jordan curve theorem.

Condition (\ref{condeltaB}) ensures tameness of the inclusion maps in the function value filtration. This condition ensures that when we filter the field by decreasing function values, field behaves in a controlled fashion in the neighborhoods of its critical points. This can be seen as a Morse condition that ensures the critical points of $f$ are isolated and the function is not locally too flat around them.

Condition (\ref{condeltak}) provides regularity on the triangulation of the domain of the function relative to variation in function values, for example observing a neighborhood of an edge becomes more difficult for a skinny triangulation. Condition (\ref{condeltall}) again is a tameness condition that uniformly bounds changes in the Euler curve for direction and function value pair. This condition is used to control the combinatorial complexity of matching the critical points of the height function. The last three conditions together ensure that each stratum in the stratification of the sphere induced by the SELECT is sufficiently large.

We need some preliminary results and notation before we provide an upperbound for the above moduli space. Denote by $L_{t}^+$ the $t$-superlevel set $f^{-1}\big([t,\infty)\big)$. To keep track of the observability of edges, we need notion for a 
neighborhood of an edge of $f$. Here is a straightforward definition:
\begin{defn}
\label{def:nb1}
Let $e$ be an edge of $f$. An edge $e'$ is a \define{superlevel neighbor} of $e$ if there exists a superlevel set $L_{+}^t$ such that the vertices
$$
e \cap L_{+}^t, e' \cap L_{+}^t
$$
are connected by an edge.
\end{defn}

We will also make use of the following more operable definition:
\begin{defn}
\label{def:nb2}
A \define{combinatorial neighbor} of edge $e=(x_0,x_1) \in K$ is an edge $e' \in K$ if there is a triangle $(x_0,x_1,x_2) \in K$ such that any of the following holds:
\begin{enumerate}
\item[(I)] $f(x_0) < f(x_2) < f(x_1)$ and $e'=(x_0,x_2)$ or $e'=(x_2,x_1)$;
\item[(II)] $f(x_2) < f(x_0) < f(x_1)$ and $e'=(x_2,x_1)$;
\item[(III)] $f(x_0) < f(x_1) < f(x_2)$ and $e'=(x_0,x_2)$.
\end{enumerate}

We will call neighbors from cases (II) and (III) \define{dominating} and case (I) neighbors \define{dominated}. We extend this terminology to any vertices obtained by restricting the edges to superlevel sets.
\end{defn}

In other words, an edge dominates its neighbors in a triangle if it has the longest vertical length.
\begin{lem}
Definitions \ref{def:nb1} and \ref{def:nb2} of superlevel neighbors and combinatorial neighbors are equivalent.
\end{lem}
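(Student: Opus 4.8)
The plan is to show the two definitions describe the same relation by unwinding what each one says about a pair of edges $e = (x_0, x_1)$ and $e'$ inside a common triangle $\sigma = (x_0, x_1, x_2)$ of $K$, and how this relation survives restriction to a superlevel set $L_+^t$. First I would fix notation: order the vertices of $\sigma$ by function value, say $f(a) \le f(b) \le f(c)$ with $\{a,b,c\} = \{x_0,x_1,x_2\}$ (Condition (\ref{condeltaB}) in Definition \ref{maindef3} lets us assume strict inequalities, which rules out degenerate constant edges). Since $f$ is linear on $\sigma$, for a generic threshold $t$ the superlevel set $\sigma \cap L_+^t$ is obtained by cutting $\sigma$ with the line $\{f = t\}$; its combinatorial type (a smaller triangle, a quadrilateral, the whole triangle, or empty) is governed precisely by where $t$ falls relative to $f(a), f(b), f(c)$. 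The restricted vertices $e \cap L_+^t$ and $e' \cap L_+^t$ are points on the respective edges that remain in $L_+^t$, and they are joined by an edge of the restricted complex exactly when they are both vertices of the same cell of $\sigma \cap L_+^t$.

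The core of the argument is then a short case analysis matching the three cases (I), (II), (III) of Definition \ref{def:nb2} with the existence of such a threshold $t$. In case (I), $f(x_0) < f(x_2) < f(x_1)$: for $t$ slightly above $f(x_2)$, the superlevel set of $\sigma$ is a small triangle with one vertex at $x_2$-side, and its restricted vertices on $e$ and on $(x_0,x_2)$ (respectively $(x_2,x_1)$) are adjacent — this gives the ``dominated'' neighbor. In cases (II) and (III) the edge $e$ spans the full range $[f(x_0), f(x_1)]$ which contains $f(x_2)$ in its interior on the other side, so $e$ is the ``dominating'' edge: choosing $t$ between $f(x_2)$ and the nearer endpoint of $e$ produces a quadrilateral or triangle cross-section in which the restricted copy of $e$ shares an edge with the restricted copy of $e'=(x_2,x_1)$ or $(x_0,x_2)$. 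Conversely, given a superlevel set $L_+^t$ realizing the adjacency in Definition \ref{def:nb1}, the two restricted vertices must lie on two edges of a single triangle $\sigma$ (since the $1$-skeleton of $\sigma \cap L_+^t$ only has edges within one triangle of $K$ at a time), and the position of $t$ relative to $f$ on the vertices of $\sigma$ forces one of the three configurations; no other orderings of $f(x_0), f(x_1), f(x_2)$ can produce an edge between restricted vertices of $e$ and $e'$ except by relabeling into one of (I)–(III).

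I expect the main obstacle to be bookkeeping rather than conceptual: carefully handling the boundary thresholds $t \in \{f(x_0), f(x_1), f(x_2)\}$ where the cross-section changes combinatorial type, and making sure the restriction $e \cap L_+^t$ is a well-defined single vertex (which is where Condition (\ref{condeltaB}), separating connected knot values by $3\delta_B$, and the fact that non-constant edges are assumed, do the work of excluding degenerate intersections). A secondary subtlety is the ``only if'' direction: one must check that an adjacency in some $L_+^t$ cannot arise from two edges that do not lie in a common triangle of $K$ — this follows because the restricted complex is the image of $K$ under the linear cut map on each simplex, so any $1$-cell of $L_+^t$ is contained in the image of a single $2$-simplex of $K$. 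Once these points are pinned down, the equivalence is immediate from the case matching above.
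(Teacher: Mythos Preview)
Your proposal is correct and follows essentially the same approach as the paper: first reduce to edges sharing a common triangle of $K$, then case-split on the position of $f(x_2)$ relative to $f(x_0),f(x_1)$ to exhibit (or rule out) a threshold $t$ at which the two restricted vertices are the endpoints of the level segment $\sigma\cap f^{-1}(t)$. One small slip to fix when you write it up: in case (I) with $t$ slightly above $f(x_2)$ the superlevel triangle sits near $x_1$ (not ``the $x_2$-side''), and this $t$ gives adjacency with $(x_2,x_1)$; you need $t$ slightly \emph{below} $f(x_2)$ to witness the adjacency with $(x_0,x_2)$.
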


\begin{proof}
Clearly an edge $e'$ is a superlevel neighbor of $e=(x_0,x_1)$ only if $e'$ is an edge of a triangle containing $x_0$ and $x_1$. For $e'$ a dominating neighbor with endpoint $x_2$, any superlevel set that intersects $e$ also intersects the triangle $(x_0,x_1,x_2)$ at $e$ and $e'$. For a dominated neighbor, a superlevel set that intersects triangle $(x_0,x_1,x_2)$ always intersects $e$, and one of the dominated neighbors depending on whether or not $f(x_2)$ is above or below the superlevel set.  
\end{proof}

Next we show that in our moduli space observability of an edge is uniform in function value filtration.
\begin{lem}
\label{lem:unifobs}
Let $f$ be in class $\mathcal{K}(d,k,\delta_k,\delta_B)$. If edge $e$ of $f$ is $\delta_k$-observable in direction $v$ at some superlevel set $L_t^{+}(f)$, then $e$ is also $\delta_k$-observable in direction $v$ for any superlevel set $L_{t'}^{+}(f)$ whenever $e_{min} < t' < e_{max}$.
\end{lem}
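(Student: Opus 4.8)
The plan is to reduce observability of $e$ at a direction and level $(\nu,t')$ to a single local invariant at the point where the level set cuts $e$, and then to observe that this invariant does not vary with $t'$ because the local picture merely translates. First I would assume without loss of generality that $e=(x_0,x_1)$ is non-constant with $e_{\min}=f(x_0)<f(x_1)=e_{\max}$; otherwise $(e_{\min},e_{\max})$ is empty and there is nothing to prove. For $t'\in(e_{\min},e_{\max})$ let $p(t')$ be the unique point of $e$ with $f(p(t'))=t'$; it depends affinely on $t'$ and sweeps out the relative interior of $e$ bijectively. Since being $\delta_k$-observable in direction $v$ at a level means that $e$ is observable in direction $\nu$ at that level for every $\nu\in B(v,\delta_k)$, it suffices to fix one such $\nu$ and to prove that observability of $e$ at $(\nu,t)$ for the given $t$ forces observability of $e$ at $(\nu,t')$ for every $t'\in(e_{\min},e_{\max})$.

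The core step is the localization. A small Euclidean ball about $p(t')$ meets only the simplices of $K$ that contain $e$ as a face, namely the star $\St(e)$, because $p(t')$ lies in the relative interior of $e$; hence near $p(t')$ the superlevel set $L_{t'}^{+}$ coincides with $\{f\ge t'\}\cap\St(e)$. By Lemma \ref{lem:projection_is_linear} (Bestvina's lemma, cf.\ \cite{BB-Morse}) the change of the Euler curve $h\mapsto\chi(L_{t'}^{+}\cap\{x\cdot\nu\le h\})$ at the height $\langle\nu,p(t')\rangle$ receives from $p(t')$ the contribution $-\tilde\chi$ of the descending link of $p(t')$ in $L_{t'}^{+}$, computed inside $\{f\ge t'\}\cap\St(e)$. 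Because $f$ is affine on each $\sigma\supseteq e$ its differential $\mathrm{d}(f|_\sigma)$ is constant, so in local coordinates centered at $p(t')$ the defining form $f-t'$ is the same for all $t'$; thus as $t'$ varies the slice $\{f=t'\}\cap\sigma$ only translates parallel to itself while $p(t')$ slides along $e$, and the link of $p(t')$ in $L_{t'}^{+}$, together with the values of $\langle\nu,\cdot\rangle-\langle\nu,p(t')\rangle$ on it, is independent of $t'$. In the language of Definitions \ref{def:nb1} and \ref{def:nb2}, only the far endpoints of the level segments of the combinatorial neighbours of $e$ move as $t'$ varies, staying bounded away from $p(t')$, while the incidences of those neighbours at $p(t')$ are unchanged. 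Consequently the descending link of $p(t')$, and hence $-\tilde\chi$ of it, is the same for every $t'\in(e_{\min},e_{\max})$.

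What remains, and what I expect to be the main obstacle, is passing from the statement ``$p(t')$ contributes a nonzero local jump'' to the statement ``the Euler curve of $L_{t'}^{+}$ genuinely changes at $\langle\nu,p(t')\rangle$'': a priori this contribution could be cancelled by those of other knots or edges of $L_{t'}^{+}$ sharing the same $\nu$-height, and that could happen at some values of $t'$ but not others. Ruling this out uniformly in $t'\in(e_{\min},e_{\max})$ is where the moduli-space hypotheses must be used---the $3\delta_B$-separation of the values of connected knots from condition (\ref{condeltaB}), which keeps $p(t')$ from coinciding with other cut points or knots as $t'$ sweeps the interval, together with the latitude in choosing $\nu$ inside $B(v,\delta_k)$ provided by condition (\ref{condeltak}), which keeps the corresponding $\nu$-heights apart. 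The translation argument of the previous paragraph is essentially formal; this no-cancellation bookkeeping is the delicate part. Granting it, $e$ is observable at $(\nu,t')$ for every $t'\in(e_{\min},e_{\max})$, and since $\nu\in B(v,\delta_k)$ was arbitrary, $e$ is $\delta_k$-observable in direction $v$ at $L_{t'}^{+}(f)$ for every such $t'$.
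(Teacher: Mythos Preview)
Your localization-and-constancy strategy coincides with the paper's. The paper likewise sets $p(t)=e\cap L_t^+$, asserts that observability at $p(t)$ is a local property depending only on the link of $p(t)$ (the angles between $p(t)$ and its superlevel neighbours $q(t)$), and then shows those angles do not depend on $t$. Where you argue abstractly from the constancy of $\mathrm{d}(f|_\sigma)$ that the local picture ``only translates,'' the paper carries this out by an explicit similar-triangles computation using the combinatorial-neighbour framework of Definitions~\ref{def:nb1}--\ref{def:nb2}: for a dominating neighbour $e'$ of $e$, with $\pi$ the point of $e'$ at level $f(x_1)$, the triangle $(x_0,p(t),q(t))$ is congruent to the fixed triangle $(x_0,x_1,\pi)$ with scale $t$, so all angles at $p(t)$ are constant; dominated neighbours are handled symmetrically from the other endpoint. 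This is the concrete realisation of your translation claim and is somewhat crisper than the differential argument as you have stated it.

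Regarding your cancellation worry: the paper does not engage with it at all. It simply declares up front that observability of $p(t)$ ``is a local property: it depends on the link of $p(t)$,'' and proves only the constancy of that link. There is no use of the $3\delta_B$ separation (condition~(\ref{condeltaB})) or of the $\delta_k$ wiggle room in $\nu$ to rule out other knots or edges landing at the same $\nu$-height and cancelling the jump. So the paper's proof is shorter than yours precisely because it sidesteps the issue you flag in your final paragraph; whether one reads that as a small gap in the paper or as a signal that ``observable'' is meant to be interpreted as the local contribution being nonzero, the bookkeeping you propose there goes beyond what the paper actually does.
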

\begin{proof}

The key idea is that an edge projects to its dominating neighbor linearly.

Let $e$ be an edge $(x_0,x_1)$ such that $f(x_1)>f(x_0)$. Write $p(t)$ for the intersection point of $e$ and a $t$-superlevel set of $f$, that is

$$
p(t) = e \cap L_{t}^{+}, \ f(x_0) < t < f(x_1).
$$ 

The observability of $p(t)$ is a local property: it depends on the link of $p(t)$, or the angles between $p(t)$ and its neighbors. It suffices to show that for any neighbor $q(t)$ of $p(t)$ the angle between $q(t),p(t)$ and $e$ is constant in $t$.

Let $e'$ be a dominating neighbor of $e$. This means $f$ attains either a higher or lower value somewhere on $e'$ than anywhere on $e$. If higher, let $\pi$ be the point on $e'$ where $f$ attains value $f(x_1)$. Then for any $t \in [0,1]$,

$$
p(t) = (1-t)(x_0,f(x_0))+t(x_1,f(x_1)),
$$

and

$$
q(t) = (1-t)(x_0,f(x_0))+t(\pi,f(\pi)).
$$

For any choice of $t$, the triangle $(x_0,p(t),q(t))$ is congruent to $(x_0,x_1,\pi)$ with scale $t$, in particular, the angles are constant. With the obvious changes, the same argument holds when $e'$ attains a lower value with congruence to the triangle $(x_1,p(t),q(t))$.

For $e'$ a dominated neighbor of $e$, let $\pi$ be the point on $e$ where $f$ attains value $f(x_2)$. Again a linear interpolation on $(x_0,x_2)$ projects to a linear interpolation on $(x_0,\pi)$, while an interpolation on dominated neighbor $(x_2,x_1)$ projects to an interpolation on $(\pi,x_1)$.
\end{proof}

This uniform lower bound on observability implies the following corollary:

\begin{cor}
\label{circ1}
For $f \in \mathcal{K}(d,k,\delta_k,\delta_B)$, every superlevel set of $f$ is in the class $\mathcal{K}(d,k,\delta_k)$.
\end{cor}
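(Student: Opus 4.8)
The plan is to verify, one condition at a time, that an arbitrary superlevel set $L_t^+ = f^{-1}([t,\infty))$ satisfies the defining properties of the target class $\mathcal{K}(d,k,\delta_k)$, reading each property off from the corresponding hypothesis on $f$. Recall that a constructible shape lies in $\mathcal{K}(d,k,\delta_k)$ precisely when it is a compact embedded geometric simplicial complex in $\mathbb{R}^d$, every (non-constant) edge of it is $\delta_k$-observable, and its Euler curve in every direction has at most $k$ jumps; so there are three points to check.

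First I would dispose of the two bookkeeping points. Because $f$ obeys Condition~(\ref{conPL}) it is affine on each closed simplex $\sigma$ of the underlying complex $K$, so $L_t^+$ meets $\sigma$ in the convex polytope $\sigma \cap \{f \ge t\}$; the union of these polytopes over $\sigma \in K$ is a compact polyhedron, and the subdivision of $K$ adjoining the slicing points $e \cap f^{-1}(t)$ (over the finitely many edges $e$ crossing level $t$) realizes $L_t^+$ as a subcomplex of a geometric simplicial complex, which gives the first property. For the jump count, observe that $\SELECT(f)(v,-,t)$ is by construction the Euler curve $h \mapsto \chi\big(L_t^+ \cap \{x \cdot v \le h\}\big)$ of $L_t^+$ in direction $v$, so Condition~(\ref{condeltall}) is \emph{verbatim} the assertion that this Euler curve has at most $k$ jumps for every $v$.

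The substance is the $\delta_k$-observability of the edges of $L_t^+$, which is exactly what Lemma~\ref{lem:unifobs} was set up to provide. After the subdivision above, each positive-dimensional edge of $L_t^+$ is either a restriction $e_t = e \cap L_t^+$ of an edge $e$ of $f$ that crosses level $t$, an entire edge $e$ of $f$ contained in $L_t^+$, or a new edge lying in the level hyperplane $f^{-1}(t)$ (the last being constant-valued, hence harmless or immediate depending on the precise reading of $\mathcal{K}(d,k,\delta_k)$). For an edge $e$ crossing level $t$, Condition~(\ref{condeltak}) hands us some $t_0 \in (e_{min}, e_{max})$ and a ball $B(v_0,\delta_k)$ throughout which $e_{t_0}$ is observable, and Lemma~\ref{lem:unifobs} promotes this to $\delta_k$-observability of $e$ in direction $v_0$ at \emph{every} superlevel set $L_{t'}^+$ with $t' \in (e_{min}, e_{max})$, in particular at $L_t^+$ itself; this is the core of the argument. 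The hard part — what I would flag as the main obstacle — is the edge of $f$ lying entirely inside $L_t^+$, where $t \notin (e_{min}, e_{max})$ and Lemma~\ref{lem:unifobs} does not apply as stated: there one must check $\delta_k$-observability of $e$ directly as an edge of the simplicial complex $L_t^+$, by a local analysis at the links of its endpoints reusing the linear-projection-and-congruence picture from the proof of Lemma~\ref{lem:unifobs}. Collecting the three verified properties yields $L_t^+ \in \mathcal{K}(d,k,\delta_k)$.
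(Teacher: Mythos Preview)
Your approach is essentially the paper's: the paper offers no proof at all, stating only that ``this uniform lower bound on observability implies the following corollary,'' i.e.\ it treats the result as a one-line consequence of Lemma~\ref{lem:unifobs}, which is precisely the key lemma you invoke.

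Your write-up is considerably more careful than the paper's, and the distinction is worth noting. The paper is content to observe that Condition~(\ref{condeltak}) plus Lemma~\ref{lem:unifobs} give $\delta_k$-observability of every edge-crossing $e\cap f^{-1}(t)$ at every admissible level, and implicitly reads Conditions~(\ref{conPL}) and~(\ref{condeltall}) as supplying the remaining hypotheses of $\mathcal{K}(d,k,\delta_k)$ verbatim. You go further, spelling out the polyhedral/subdivision structure of $L_t^+$ and flagging the case of an edge of $f$ lying entirely inside $L_t^+$ (where Lemma~\ref{lem:unifobs} does not literally apply). That concern is legitimate and the paper simply does not address it; whether it matters depends on exactly how $\mathcal{K}(d,k,\delta_k)$ is formulated in~\cite{CMT} --- if observability is demanded only at the boundary vertices of the superlevel set (the crossing points $e\cap f^{-1}(t)$), your worry evaporates, and that reading is consistent with how the corollary is used in the proof of Theorem~\ref{mainthm}. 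So: same approach, but you have been more scrupulous than the source.
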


Our discretization result pertains to finitely many Euler scans. We make this explicit with the following definition:

\begin{defn}
Let $f: \mathbb{R}^d \rightarrow \mathbb{R}$ be a definable function. The \define{Euler scan} 
$S^f_{\nu,t}(h): \mathbb{R} \rightarrow \mathbb{R}$ of $f$ in direction $\nu$ and function value $t$ is defined as
$$
S^f_{\nu,t}(h)= \SELECT(f)(\nu,t,h).
$$
\end{defn}

We are now ready to state the main theorem of this section:

\begin{thm}
\label{mainthm}
Any function in $f \in \mathcal{K}(d,k,\delta_k,\delta_B)$ can be determined by SELECT using no more than
$$\Delta(d,\delta,k_\delta,\delta_B) = \left[ \left((d-1) \, k_\delta \,  +1 \right) \left(1+\frac{3}{\delta}\right)^d+O\left(\frac{d^{d+1}k_\delta^{2d}}{\delta^{2d(d-1)}}\right) \right] \times \left\lfloor \frac{1}{ \delta_B}\right\rfloor$$
Euler scans.
\end{thm}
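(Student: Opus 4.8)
The plan is to reduce the field-level statement to the shape-level counting result of \cite{CMT} applied, uniformly, to each superlevel set. By Corollary~\ref{circ1}, every superlevel set $L_t^+(f)$ of $f \in \mathcal{K}(d,k,\delta_k,\delta_B)$ lies in the shape-moduli class $\mathcal{K}(d,k,\delta_k)$ for which \cite{CMT} provides a finite bound on the number of directions needed to reconstruct a member. The first step is therefore to recall (or re-derive) the Curry--Mukherjee--Turner direction-counting bound for embedded geometric simplicial complexes in $\mathcal{K}(d,k,\delta_k)$: this is where the factor $\left[\left((d-1)k_\delta+1\right)\left(1+\tfrac{3}{\delta}\right)^d + O\!\left(\tfrac{d^{d+1}k_\delta^{2d}}{\delta^{2d(d-1)}}\right)\right]$ comes from, as the number of directions whose induced stratification of $S^{d-1}$ is fine enough to separate any two complexes in the class while being coarse enough that each stratum has diameter exceeding $\delta_k$. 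I would state this as a cited lemma and isolate precisely the quantity $N(d,\delta_k,k)$ of directions it produces.

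**Handling the function-value parameter.** The next step is to discretize the $t$-axis. By condition~(\ref{condeltaB}), the distinct knot values of $f$ are separated by at least $3\delta_B$, and $f$ takes values in $[0,1]$; hence there are at most $\lfloor 1/\delta_B\rfloor$ "combinatorially distinct" superlevel sets, and more to the point, sampling $t$ at the grid $\{j\delta_B : 0 \le j \le \lfloor 1/\delta_B\rfloor\}$ suffices to detect every knot value and every edge-crossing value. Here Lemma~\ref{lem:unifobs} is the crucial input: it guarantees that observability of an edge is uniform across the whole interval $(e_{\min},e_{\max})$, so that once we have pinned down the superlevel set at the finitely many sampled values of $t$ — and in particular identified each edge $e$ via a sampled $t$ at which $e_t$ is $\delta_k$-observable in a ball of directions — we recover the entire piecewise-linear graph of $f$ by linear interpolation between the reconstructed level sets. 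Combining: for each of the $\lfloor 1/\delta_B\rfloor$ sampled function values we run the $N(d,\delta_k,k)$-direction reconstruction of the corresponding superlevel set, giving the product bound $\Delta(d,\delta,k_\delta,\delta_B) = N(d,\delta_k,k)\times\lfloor 1/\delta_B\rfloor$ Euler scans.

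**Assembling the argument.** So the proof skeleton is: (i) invoke Corollary~\ref{circ1} to place each superlevel set in the shape class; (ii) invoke the \cite{CMT} bound to get $N(d,\delta_k,k)$ directions per superlevel set; (iii) invoke condition~(\ref{condeltaB}) to bound the number of relevant $t$-samples by $\lfloor 1/\delta_B\rfloor$; (iv) invoke Lemma~\ref{lem:unifobs} (via condition~(\ref{condeltak})) to argue that the sampled superlevel sets, each reconstructed, determine $f$ by interpolation; (v) multiply. One should also note that $\SELECT(f)(\nu,h,t)=\ECT(L_t^+(f))(\nu,h)$, so an Euler scan $S^f_{\nu,t}$ at a fixed $(\nu,t)$ is literally an Euler curve of the superlevel set, making the bookkeeping of "Euler scans" match the bookkeeping of "directions $\times$ level-set samples" exactly.

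**Main obstacle.** The hard part will be step (ii): faithfully extracting, from \cite{CMT}, the explicit constant $N(d,\delta_k,k)$ with the stated leading term $\left((d-1)k_\delta+1\right)\left(1+\tfrac{3}{\delta}\right)^d$ and error term $O\!\left(\tfrac{d^{d+1}k_\delta^{2d}}{\delta^{2d(d-1)}}\right)$. This requires a packing/covering argument on $S^{d-1}$: one needs enough directions that the arrangement of "walls" (the loci where some edge changes from observable to unobservable, controlled by condition~(\ref{condeltall})'s bound of $k$ jumps) cuts the sphere into cells each of which contains a sampled direction, and one must verify that condition~(\ref{condeltak})'s $\delta_k$-ball of observable directions around each edge guarantees the sampled direction actually witnesses that edge. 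Bounding the number of such walls (each contributed by a pair of edges, giving the $k^{2}$-type dependence raised to a power reflecting the dimension of the relevant sub-Grassmannian) and then covering the resulting arrangement with a $\delta_k$-net is the combinatorial-geometric core of the estimate; the rest is assembly.
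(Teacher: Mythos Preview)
Your high-level architecture matches the paper exactly: cite the \cite{CMT} direction bound (their Theorem~7.14) for each superlevel set, sample the $t$-axis at $\lfloor 1/\delta_B\rfloor$ values, and multiply. Where you diverge from the paper is in the distribution of effort, and this matters.

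You flag step~(ii) as the ``main obstacle'' and sketch a packing/covering argument on $S^{d-1}$. In the paper this step is a one-line citation: the explicit quantity $\left((d-1)k_\delta+1\right)\left(1+\tfrac{3}{\delta}\right)^d + O\!\left(\tfrac{d^{d+1}k_\delta^{2d}}{\delta^{2d(d-1)}}\right)$ is Theorem~7.14 of \cite{CMT}, invoked via Corollary~\ref{circ1}. Nothing needs to be re-derived.

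The actual work, and almost the entire proof in the paper, is your step~(iv), which you treat as ``recover the entire piecewise-linear graph of $f$ by linear interpolation between the reconstructed level sets.'' This is where the argument is nontrivial, and your justification has a gap. Your claim that condition~(\ref{condeltaB}) implies ``there are at most $\lfloor 1/\delta_B\rfloor$ combinatorially distinct superlevel sets'' is false: the $3\delta_B$ separation applies only to \emph{connected} knots, so many unconnected knots can have function values clustered arbitrarily closely, and the combinatorial type of $L_t^+$ can change far more than $\lfloor 1/\delta_B\rfloor$ times. What condition~(\ref{condeltaB}) actually buys is that along any edge at most one new knot appears between two consecutive $\delta_B$-spaced samples. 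The paper exploits this via a careful \emph{top-down} reconstruction: it treats local maxima (recovered from congruent polytopes at two consecutive samples, whose scaling ratio pins down the apex value), then knots with at least two higher neighbors (located at intersections of rays already known from above), then knots with exactly one higher neighbor (detected by a change in face normals, disambiguated from a lower adjacent edge using the $3\delta_B$ gap and one further sample). The $3\delta_B$ constant is used specifically to guarantee that a sampled level lands strictly between a knot and its lower neighbors even when the knot value coincides with a grid point. None of this is ``linear interpolation between level sets'' in any automatic sense; it is a case analysis that you should supply.
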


\begin{proof}
Note that Theorem 7.14 of \cite{CMT} stated an upper bound on the number of directions needed to determine a shape using the ECT of 
$$
\Delta(d,\delta,k_\delta)= \left((d-1) \, k_\delta \,  +1 \right) \left(1+\frac{3}{\delta}\right)^d+O\left(\frac{d^{d+1}k_\delta^{2d}}{\delta^{2d(d-1)}}\right).
$$
We will use this result in our proof.

We  obtained our bound by taking the superlevel sets along thresholds $1-\delta_B,1- 2 \delta_B, \ldots,  1 - \lfloor \frac{1}{ \delta_B}\rfloor \delta_B$, and reconstructing each piece starting from the top. Then it suffices to show we can reconstruct $f$ by interpolating the superlevel sets at the given thresholds. We show this procedure is enough to recover the knots and edges of $f$, beyond that the function is linear. We start the reconstruction from top to bottom, so at any given superlevel set we may assume we know all the vertices in any superlevel set above the given superlevel set.

Function $f$ takes values in $[0,1]$, and each of its knots is vertically at least $3\delta_B$ apart from its neighbors. This implies that each of the superlevel sets $L_{1-\delta_B}^{+}, L_{1-2\delta_B}^{+}$ is either empty or a disjoint union of polygons around the apexes of $f$.

For $x_0$ an apex such that $1- \delta_B \le f(x_0) \le 1$, the connected component containing $x_0$ intersected with the superlevel sets $L_{1-\delta_B}^{+}, L_{1-2\delta_B}^{+}$ are congruent polytopes with scale $(f(x_0)-1+\delta_B):(f(x_0)-1+2\delta_B)$, and the intersections with superlevel sets in between are given by linear interpolation. This interpolation extends all the way to $t=1$ until the polytopes contract to the apex $x_0$. With these two superlevel sets we learn all the superlevel sets down until $1-2\delta_B$, the locations of the apexes and the function values, and the behavior of the edges in their neighborhoods.

By matching the face normals of these polytopes, we obtain a matching for the vertices. We then know that the neighbors of $x_0$ are located on the rays starting from $x_0$ that pass through the vertices of the polytopes. With the obvious changes, a similar reconstruction strategy works for any local maximum of $f$, which we identify from a new connected component in the superlevel set.

Other than local maxima, $f$ may have two other kinds of knots:
\begin{enumerate}
\item knots that have at least 2 neighbors with a higher function value;
\item knots that have exactly 1 neighbor with a higher function value.
\end{enumerate}

Let $x_0$ be a knot with at least 2 neighbors that have a higher function value, and let $U$ and $L$ be two consecutive superlevel sets such that $x_0$ is contained in $L$ but not $U$.

As $x_0$ has 2 neighbors above it, from $U$ we already anticipate $x_0$ at the intersection of the rays coming from the neighbors of $x_0$ that are in $U$. Call this anticipated position $\tilde x_0$. When we see $L$, and verify that indeed $\tilde x_0 \in L$ and that the line segments from $\tilde x_0$ to its neighbors above are also contained in $L$, we then know that $\tilde x_0 =x_0$, and also how $x_0$ connects to the neighbors above it. The other neighbors of $x_0$ are vertically at least $3\delta_B$ lower than $x_0$, and therefore the edges from $x_0$ to them are on the boundary of $L$ when restricted to the superlevel set. As we already know the knots and edge rays from the earlier superlevel sets, we simply connect $x_0$ to all the boundary edges of $L$ that we can without intersecting the known connections. In the case $f(x_0)$ is exactly the value corresponding to the superlevel set $L$, we need another superlevel set $L'$ below $L$ for reconstructing $x_0$ and its link. This is again guaranteed to be bounded away from lower neighbors of $x_0$, because they are at least $3\delta_B$ apart vertically.

Finally, there are knots that have exactly one neighbor with a higher function value. Again let $x_0$ be such a knot and $U$ and $L$ be two consecutive superlevel sets such that $x_0$ is contained in $L$ but not $U$. Let $x_1$ be the neighbor of $x_1$ located higher than $x_0$. Let us first assume $x_0$ has some other neighbor(s) as well.

The face normals associated with the edge $(x_1, x_0)$ have to change as we go below $x_0$ in the superlevel sets, if $x_0$ is vertex that does not merely subdivide an edge, in which case $x_0$ is irrelevant for reconstructing $f$. The other way face normals can change is when the superlevel sets go below another edge $(x_2,x_1)$ that is adjacent to $(x_1,x_0)$. However, as $x_0$ is at least $3 \delta_B$ higher than $x_2$, we can determine whether the superlevel sets went below edge $(x_0,x_1)$ or $(x_2,x_1)$ by going to the next superlevel set. In doing this we also learn the edges around $x_0$ going to its neighbors with lower function values. We can the pinpoint the exact location of $x_0$ by intersecting these lines.

In the case $x_0$ has exactly 1 neighbor $x_1$, by continuity of $f$, $f(x_0)=0$. As this neighbor is above them, and we can find $x_0$ by interpolating the rays from the vertices discovered earlier. The exact cut point is given by the fact that $f(x_0)=0$.
\end{proof}

\subsection{Practical Considerations}

Theorem \ref{mainthm} provides a theoretical guarantee on how many many scans are needed to distinguish certain fields. It does not, however, specify how fine of a discretization is needed in practice for a good representation. In this subsection, we address this issue, focusing on the field value filtration.

When discretizing the transform, an important consideration is that our goal is to numerically evaluate the integral $\d_{SELECT}(f_1,f_2)$. For shapes, it is a standard practice to pick the directions and heights at uniform intervals and simply take the mean over the matrices. However, with fields we have an extra parameter that scales the computation time linearly. Oftentimes a better idea is to select non-uniform thresholds, and then adjust the weights accordingly with e.g. the trapezoid rule.

\begin{ex}
\label{parameterexample}
Continuing the 2D example from Example \ref{extraexample}, we have 2 piecewise linear fields. Both of these fields are saturated at $0$ and $1$, meaning that they do not exactly adhere to the Condition \ref{condeltaB} of Definition \ref{maindef3}. However, the superlevelsets of these plateaus are still valid shapes and we know how many directions we need to tell them apart.

Elsewhere, the minimum of the gradient for either of the fields is $3.9 \times 10^{-3}$, so the $\delta_B$ for both of the fields is $1.3 \times 10^{-3}$, meaning that theoretically, some 770 field value filtrations would suffice. However, in practice, a smaller representation is likely to be good enough.

In Figure \ref{extra2} we see the evolution of the distances of the fields as we vary the discretization parameters. The transformations are computed on a discretized grid of size $(720,300,100)$, representing the numbers of directions, height filtrations and field value filtrations, respectively. We then subsample each of the grid dimensions pruning them uniformly and study the resulting approximation for the integral.

\begin{figure}[!htbp]
\includegraphics[width=\linewidth]{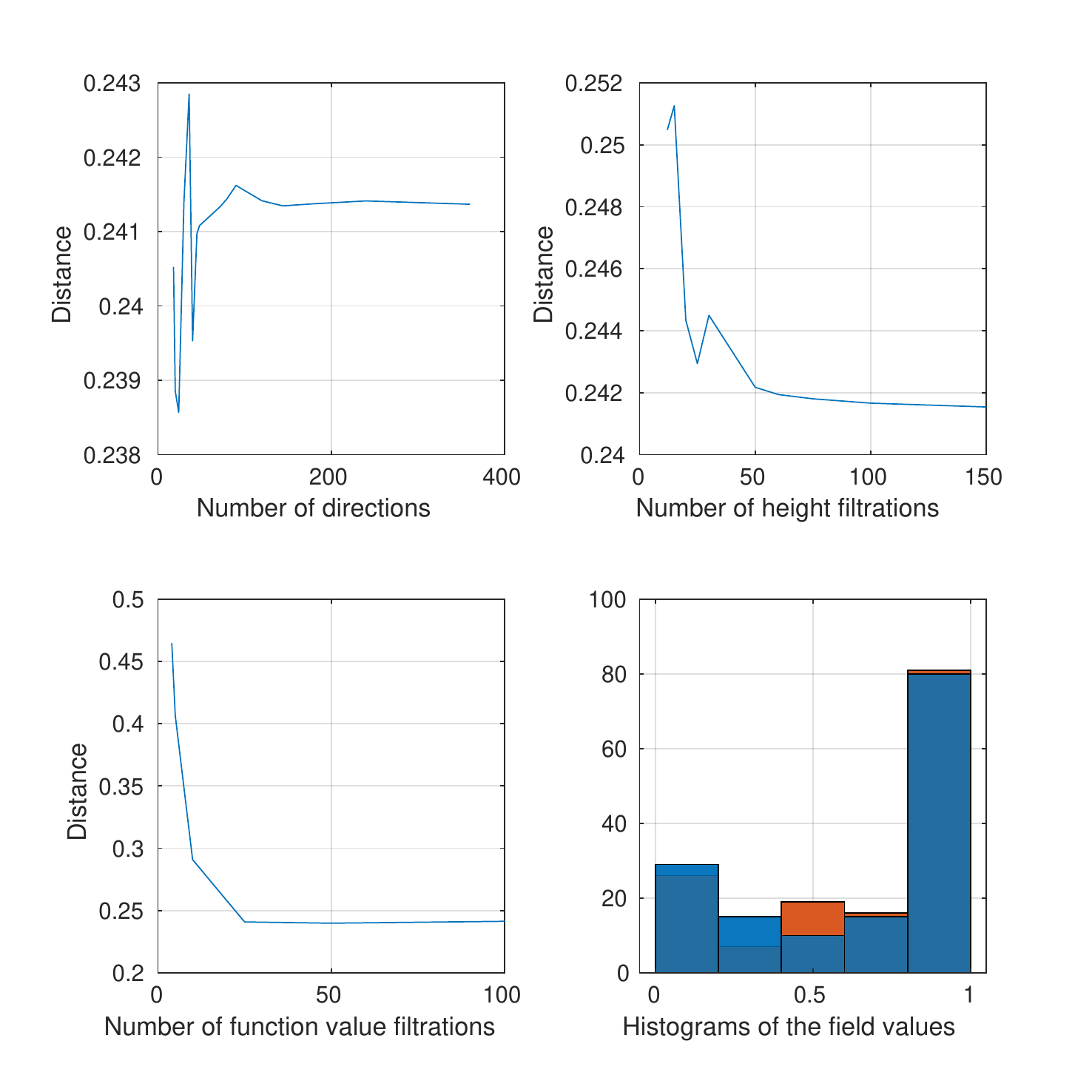}
\caption{ Example \ref{parameterexample}. Distance of the two fields as we vary the discretization parameters. The data concentrates towards higher field values, indicating there is likely more information there.} \label{extra2}
\end{figure}

We see that the discretization is stable starting from grid size $(150,50,25)$. However, by looking at the histogram of the field values (Figure \ref{extra2}), we see that the uniform sampling is likely inefficient. We can obtain a reasonable representation of the distance by concentrating more on field value thresholds at around $0.8-1$. For instance, in this example, just the thresholds at function values $(0.4,0.8,0.85,0.95,1)$ give normalized distance $0.2574$ (cf. $0.2414$ with 100 filtrations), which accounts for a 4-fold improvement in computation speed over the 20 filtrations (where $d=0.2575$).

\end{ex}

%%%%%%%%%%%%%%%%%
%END OF SECTION 3
%%%%%%%%%%%%%%%%%

\section{Applications to simulated and real data}
\label{sec:applications}
In this section we provide a proof-of-concept application of the Super Lifted Euler Characteristics Transform's utility in modeling fields.
First, we consider a clustering problem on a simulated dataset. Then, we use SELECT for predicting various molecular markers and subtypes based on MRI images in a breast cancer dataset. We couple the transform with a standard classifier model and compare the results against state-of-the-art machine learning methods.

The results on the simulated data provide an intuition on how the transform can be used to compare fields and which properties of the fields the transform keys in on. For the cancer application, we find that using an off-the-shelf classifier model with a SELECT representation provides comparable classification accuracy to a highly-optimized deep neural network. The state-of-the-art machine learning methods use a specifically tailored model for each classification task, and each of these models is a result of extensive fine tuning of parameters and inputs based on vast research on feature extraction from the fields. Our classifier is an off-the-shelf classifier with standard parameters. In fact, we use the same model for all tasks. 

\subsection{Simulations}

We look at the differences of the Super Lifted Euler Characteristic Transform between four qualitatively different families of fields.
The fields are evaluated on a $3$-dimensional grid consisting of $1,000$ points in the cube $[-1,1]^3$ where
$$
(x_i,y_j,z_k)= \left(-1 +\frac{2i}{9},-1 +\frac{2j}{9},-1 +\frac{2k}{9}\right), \quad i,j,k = 0,1,\ldots,9.
$$

\begin{figure*}[!htbp]
    \centering
    \begin{subfigure}[b]{0.225\textwidth}
        \centering
        \includegraphics[width=\textwidth]{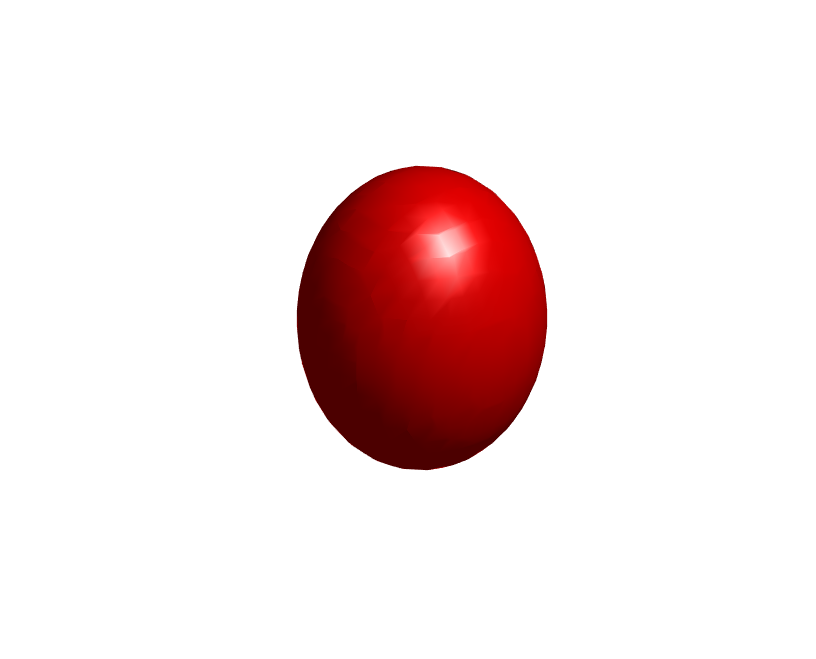}
        {{\small Group 1}}    
    \end{subfigure}
    \begin{subfigure}[b]{0.225\textwidth}  
        \centering 
        \includegraphics[width=\textwidth]{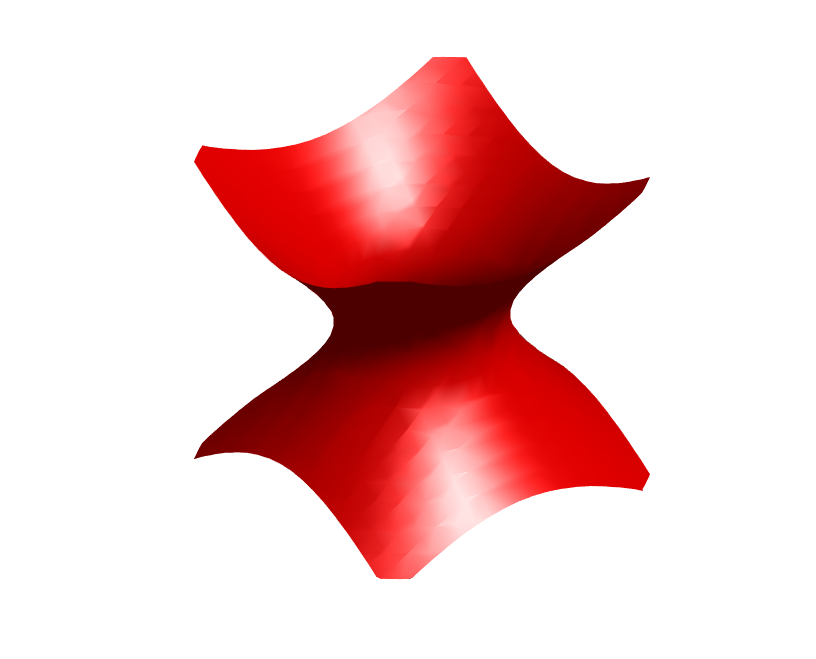}
        {{\small Group 2}}    
    \end{subfigure}
    \begin{subfigure}[b]{0.225\textwidth}
        \centering
        \includegraphics[width=\textwidth]{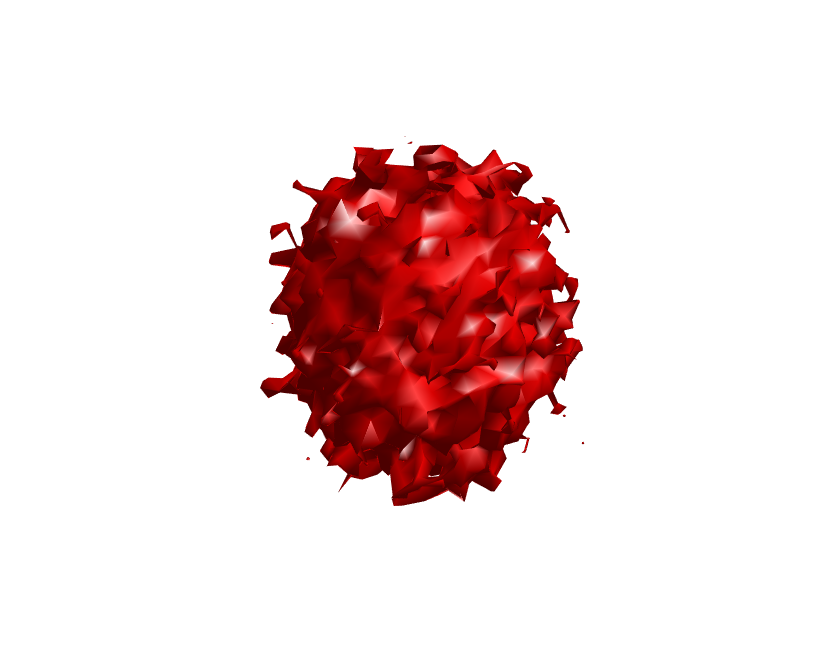}
        {{\small Group 1}}    
    \end{subfigure}
    \begin{subfigure}[b]{0.225\textwidth}  
        \centering 
        \includegraphics[width=\textwidth]{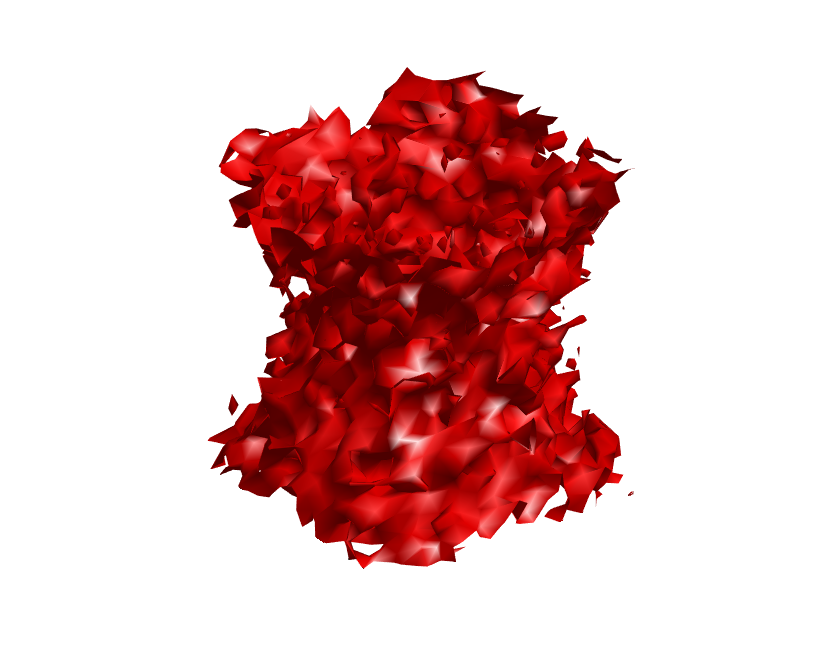}
        {{\small Group 2}}    
    \end{subfigure}
    \vskip\baselineskip
        \begin{subfigure}[b]{0.225\textwidth}   
        \centering 
        \includegraphics[width=\textwidth]{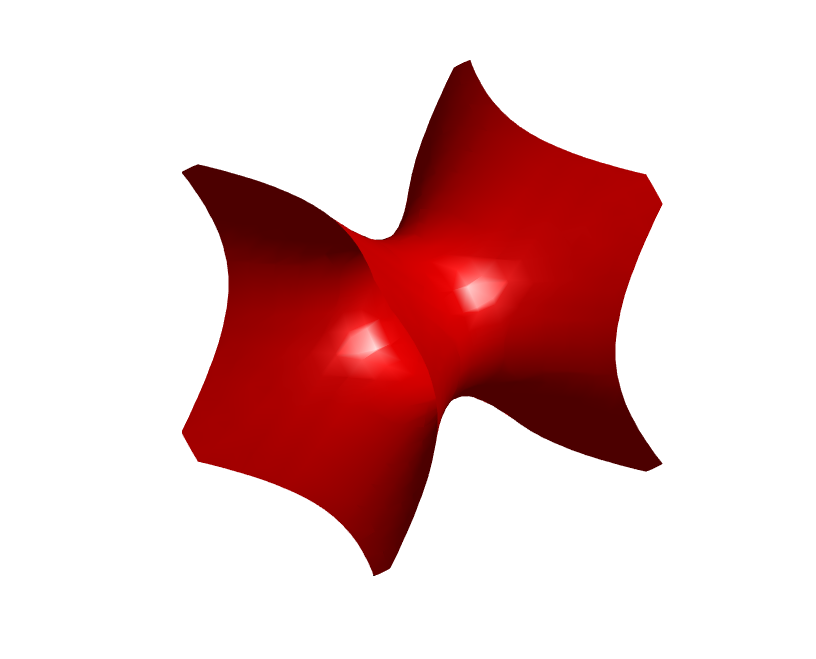}
        {{\small Group 3}}    
    \end{subfigure}
    \begin{subfigure}[b]{0.225\textwidth}   
        \centering 
        \includegraphics[width=\textwidth]{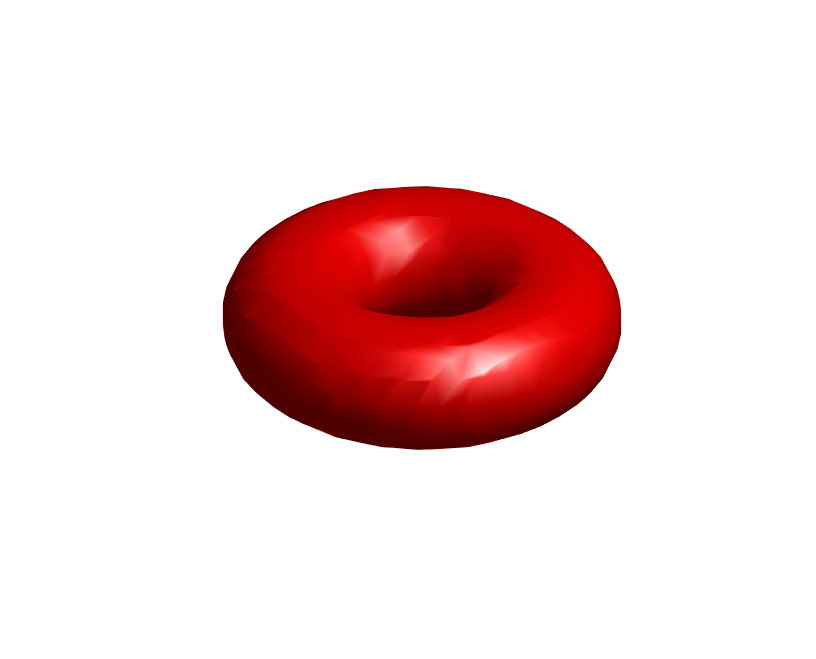}
        {{\small Group 4}}    
    \end{subfigure}
    \begin{subfigure}[b]{0.225\textwidth}   
        \centering 
        \includegraphics[width=\textwidth]{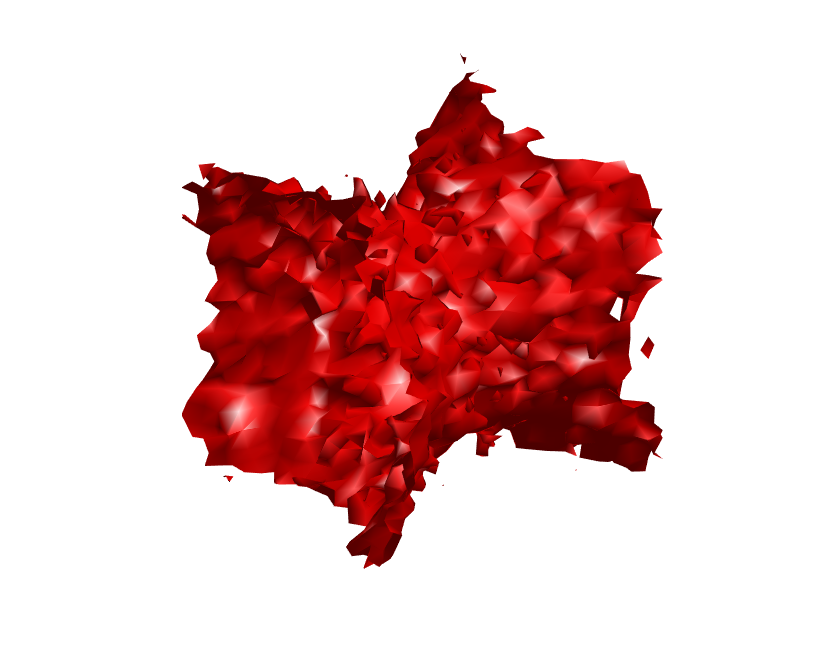}
        {{\small Group 3}}    
    \end{subfigure}
    \begin{subfigure}[b]{0.225\textwidth}   
        \centering 
        \includegraphics[width=\textwidth]{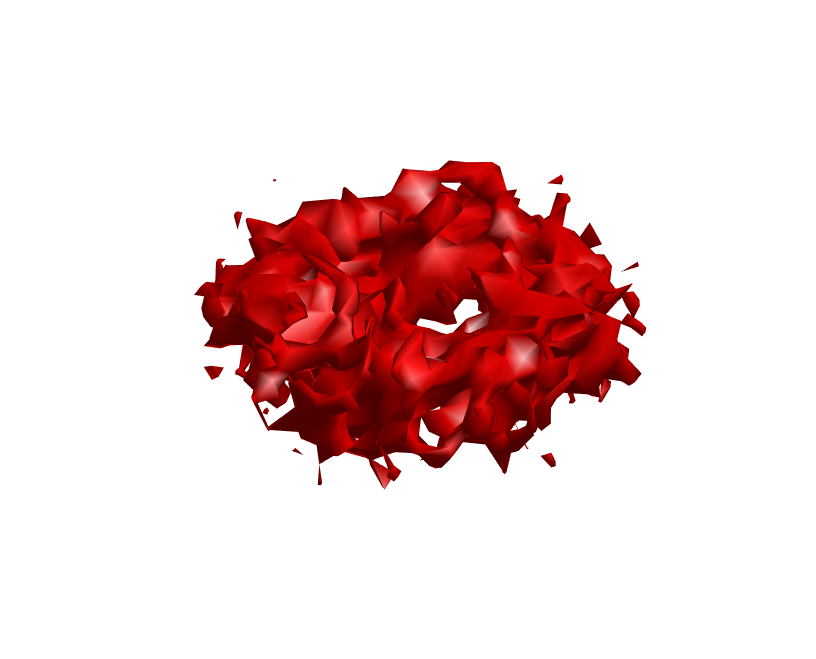}
        {{\small Group 4}}    
    \end{subfigure}
      \caption{Example contours of the fields. On the left the simulation setup 1 with uncorrupted fields, and on the right setup 2 with the added noise.}
\label{fig:figs2}
\end{figure*}

Each of the family of fields have three parameters  $\alpha, \beta, \gamma >0$. We generate $10$ fields for each of
the $4$ families, so for field $F_{l,t}$ the family index is $t$ and $l$ indexes the random field from family $t$. The following
simulation procedure was used to generate the random fields
\begin{align*}
t=1: F_{l,t}(x_i,y_j,z_k) & = \alpha_{l,1} x_i^2+ \beta_{l,1} y_j^2+ \gamma_{l,1} z_k^2 + \epsilon_{1,l,i,j,k}; \\
t=2: F_{l,t}(x_i,y_j,z_k) & = \alpha_{l,2} x_i^2+ \beta_{l,2} y_j^2- \gamma_{l,2} z_k^2 + \epsilon_{2,l,i,j,k}; \\
t=3: F_{l,t}(x_i,y_j,z_k) & = \alpha_{l,3} x_i^2- \beta_{l,3} y_j^2- \gamma_{l,3} z_k^2 + \epsilon_{3,l,i,j,k} ; \\
t=4: F_{l,t}(x_i,y_j,z_k) & = \big(\sqrt{\alpha_{l,4} x_i^2+ \beta_{l,4} y_j^2} - \delta_{l,t}\big)^2 + \gamma_{l,4} z_k^2 + \epsilon_{4,l,i,j,k},
\end{align*}
where $\alpha_{l,t}, \beta_{l.t}, \gamma_{l,t} \stackrel{iid}{\sim} \mbox{U}[.5,1]$, and independently for $t=4$ $\delta_{l,t} \stackrel{iid}{\sim} \mbox{U}[.4,.6]$.

\begin{figure}[!thbp]
\begin{subfigure}{0.48\textwidth}
\includegraphics[width=\linewidth]{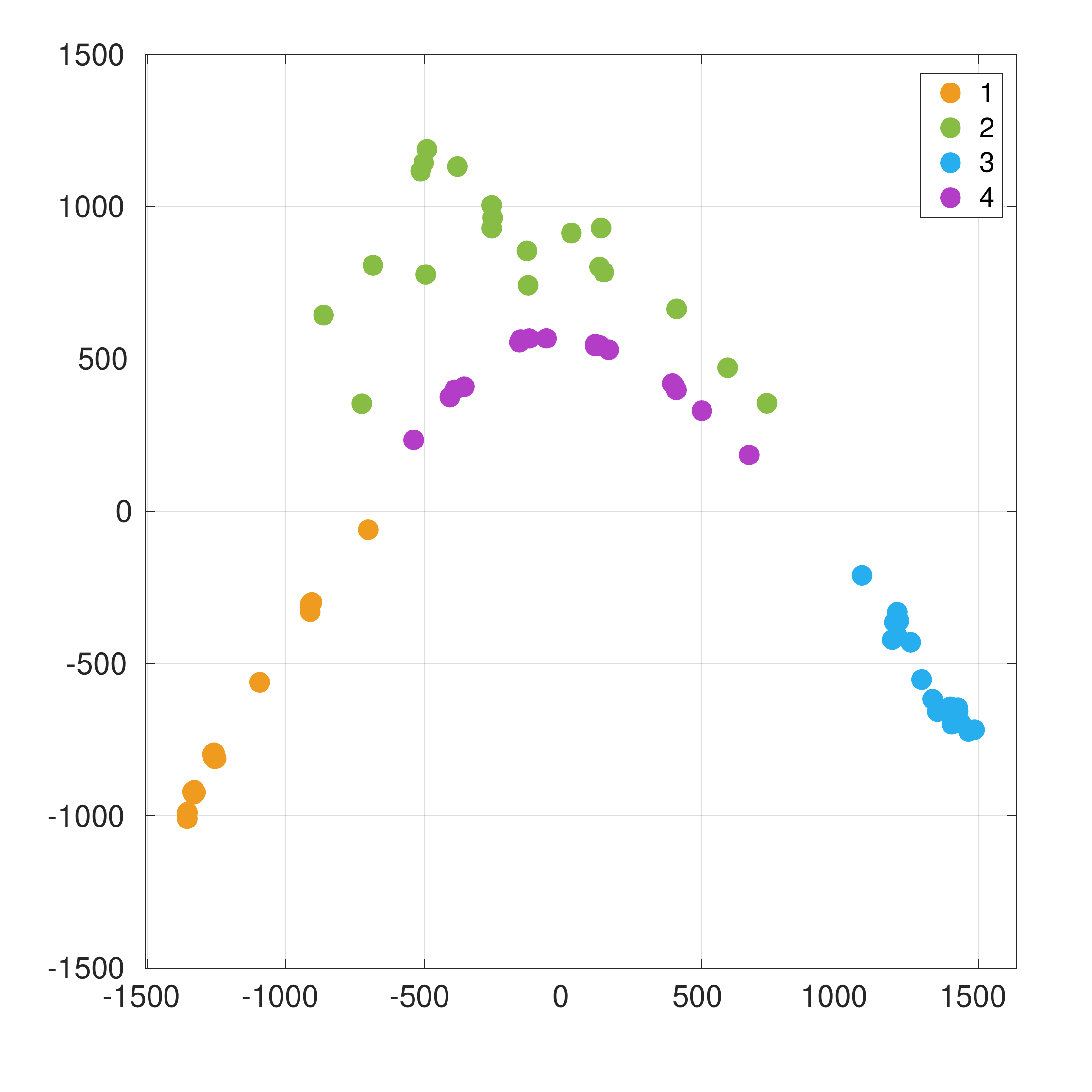}
\caption{MDS plot}
\end{subfigure}
\begin{subfigure}{0.48\textwidth}
\includegraphics[width=\linewidth]{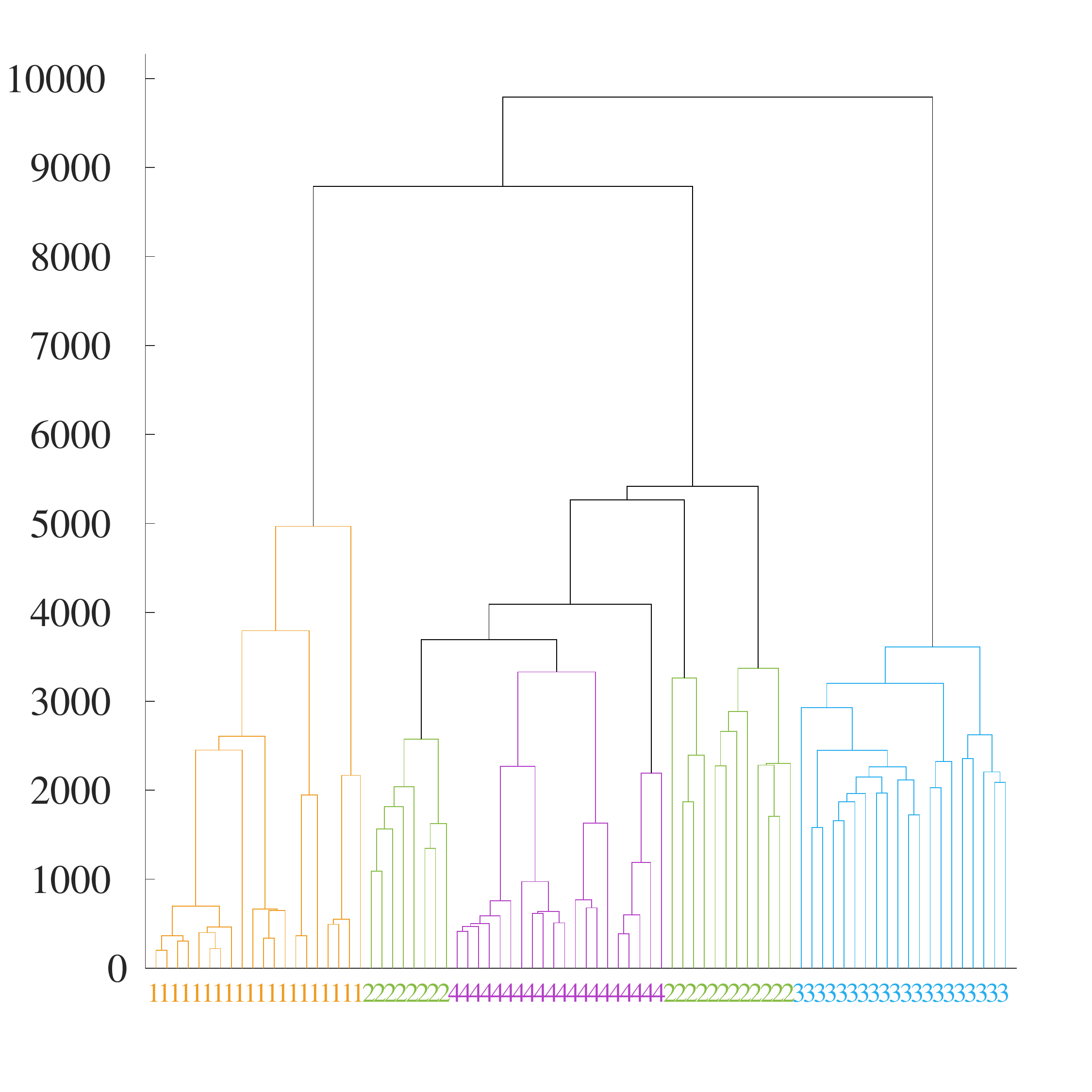}
\caption{Dendogram}
\end{subfigure}
\medskip
\begin{subfigure}{0.48\textwidth}
\includegraphics[width=\linewidth]{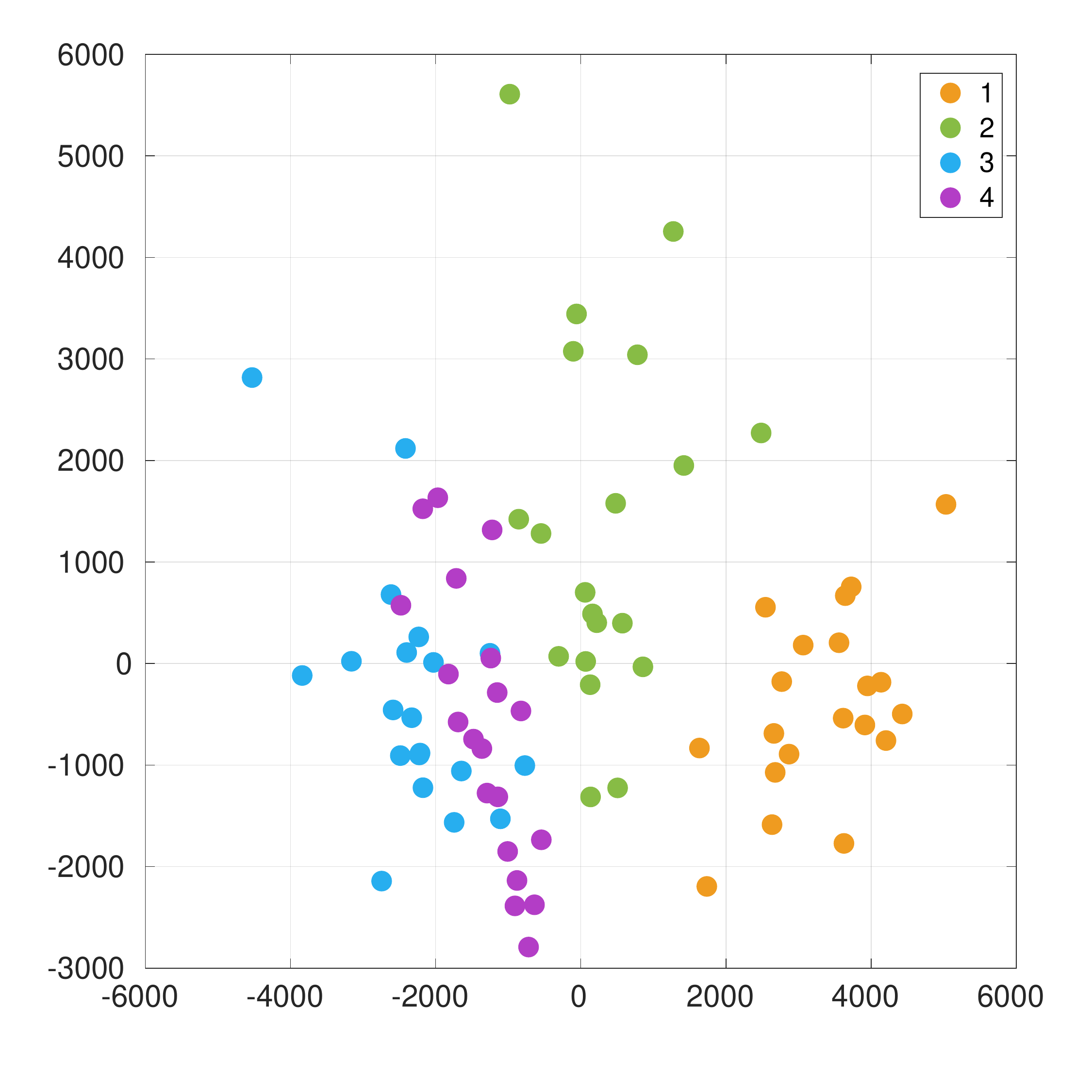}
\caption{MDS plot}
\end{subfigure}
\begin{subfigure}{0.48\textwidth}
\includegraphics[width=\linewidth]{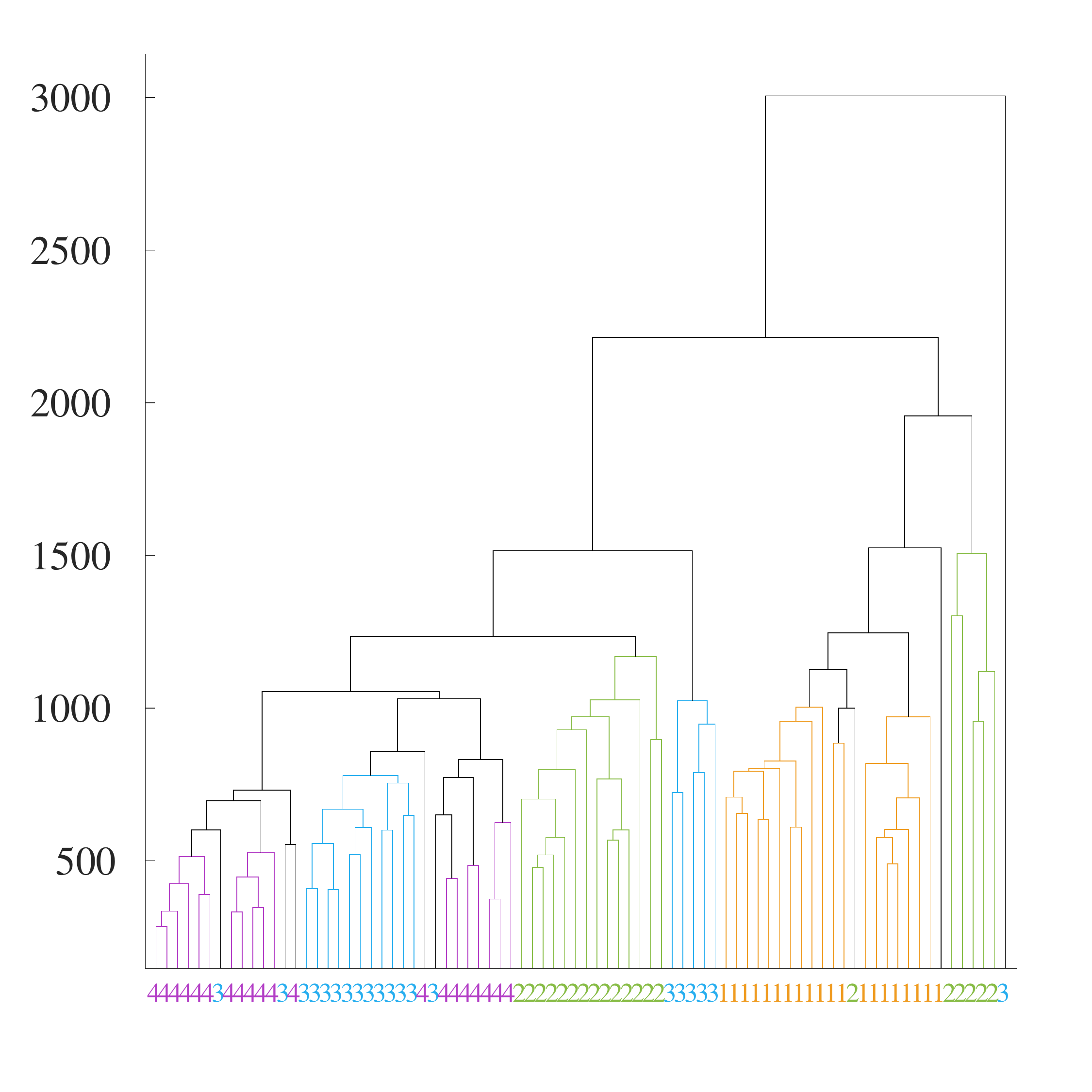}
\caption{Dendogram}
\end{subfigure}
\caption{Graphical summaries of the simulations findings. Figures (a) and (c) are Multidimensional Scale plots (MDS) and (b) and (d) are dendograms. Figures (a)-(b) are for the noiseless setup and Figures (c)-(d) are the setup with noise.} \label{fig:figs3}
\end{figure}

In terms of the noise model we consider two simulation setups:
\begin{enumerate}
\item[Setup 1:] $\epsilon_{t,l,i,j,k}=0$ ;
\item[Setup 2:] $\epsilon_{t,l,i,j,k} \sim^{\textrm{iid}} \mathcal{N}(0,.1)$, independently of the coefficients $\alpha, \beta, \gamma$.
\end{enumerate}

In Setup 1, the fields are noiseless quadratics with varying coefficients determined by parameters $\alpha, \beta$ and $\gamma$. Geometrically, these parameters control  their scale and eccentricity. In the second setup each field is corrupted by Gaussian noise. 
Contours of a representative from each family in both setups are displayed in Figure \ref{fig:figs2}.

We scale the fields by a global constant such that all of them take values in $[0,1]$. Then we pick 30 thresholds uniformly on $[0,1]$, and compute the corresponding superlevel sets by matlab isosurface procedure. To compute the transform, we choose 362 roughly uniformly spread directions over $S^2$, and evaluate the Euler curves at 100 height value filtrations for each superlevel set. Each field is then represented by an array of size $362 \times 100 \times 30$.

Graphical summaries of the clusters formed by the SELECT distances between the fields and families are presented in Figure \ref{fig:figs3}. In the noiseless setup, the dendogram in Figure \ref{fig:figs3} (b) shows a clear clustering of the families with groups 2 and 4 being closest. These two groups are similar in that both families take a cylindrical shape in the $xy$-plane, as is depicted in Figure \ref{fig:figs2}. Group 1 is distinct from the rest in that it grows in all directions: Group 3 only grows in 1 direction and groups 2 and 4 are somewhere in between. Groups 2 and 4 also share the same axis of symmetry.

When we corrupt the signal the groups get closer to each other, as is evident from Figure \ref{fig:figs3} (c). We see that group 1 is quite well separated from the rest, and that group 4 overlaps with groups 2 and 3. These groups are different from Group 1 in that they have regions of high curvature. When noise gets added to the center of the curvature, this has the potential of creating loops in the superlevel sets, thus making these fields similar in their overall complexity. On the other hand the curvature in group 1 is almost the same everywhere, so the added noise may create a new connected component but it is unlikely to create a loop.

\subsection{Application to Breast Cancer Imaging Dataset}

To assess the applied utility of the method on a real life dataset, we test how well the Super Lifted Euler Characteristic Transform can predict different types of molecular subtypes and markers based on MRI sequences. We use a freely available Breast Cancer Dataset\footnote{\url{https://sites.duke.edu/mazurowski/resources/breast-cancer-mri-dataset/}} that was analyzed in \cite{Saha}. For benchmarking, we compare the method against the imaging based machine learning methods studied in \cite{Saha}. These benchmark models build on imaging features engineered through extensive medical research based and they represent the performance achievable by current understanding of what explicit feature extraction can achieve. We consider seven binary classification tasks presented in that paper. The details are in Appendix \ref{secA2}.

\begin{figure}[h]
\centering
\includegraphics[width=12cm]{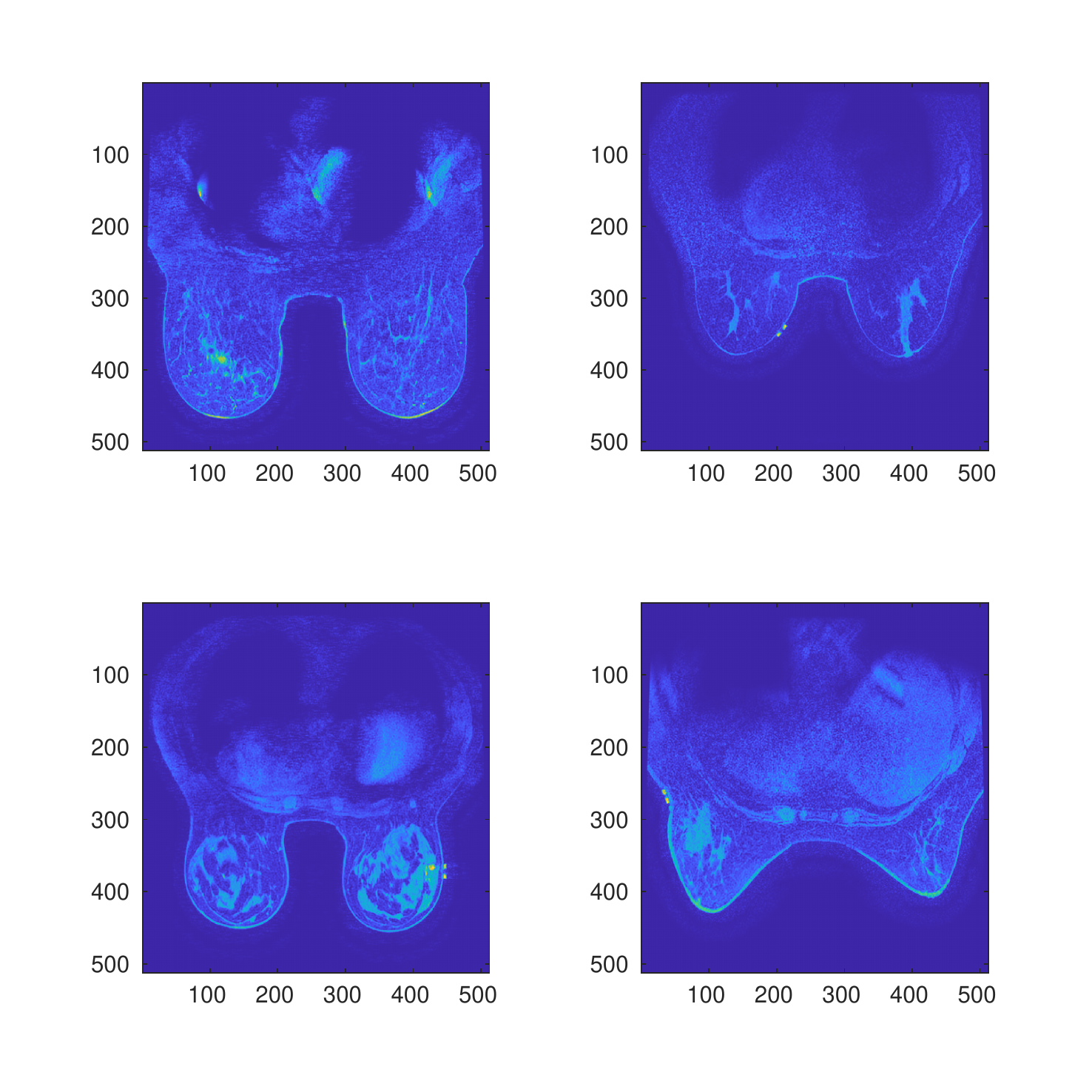}
\caption{Examples of the considered averaged MRI sequences. Depicted are 4 transaxial slices corresponding to 4 different patients. The fields are axis-aligned.}\label{fig:example}
\end{figure}

\subsubsection{The Data}

The dataset comprises a heterogeneous set of MRI sequences of 922 biopsy-confirmed invasive breast cancer patients. We consider the point-wise average of the axis-aligned post-contrast fat-saturated MRI sequences. These MRI images come from scanners with a lot of technical variation stemming from differences in manufacturers, magnetic field strengths and acquisition parameters.

We compute the Super Lifted Euler Characteristic Transform for the MRIs of the 922 patients. Examples of these MRIs are presented in Figure \ref{fig:example}. We restrict the MRI sequences to expert prepared annotation masks, these are depicted in Figure \ref{fig:example2}. The molecular subtypes and markers that we are predicting are presented in Table \ref{table:freqs} together with their frequencies. Further details of the data are presented in Appendix \ref{secA2}.

\begin{figure}[!thbp]
\includegraphics[width=\linewidth]{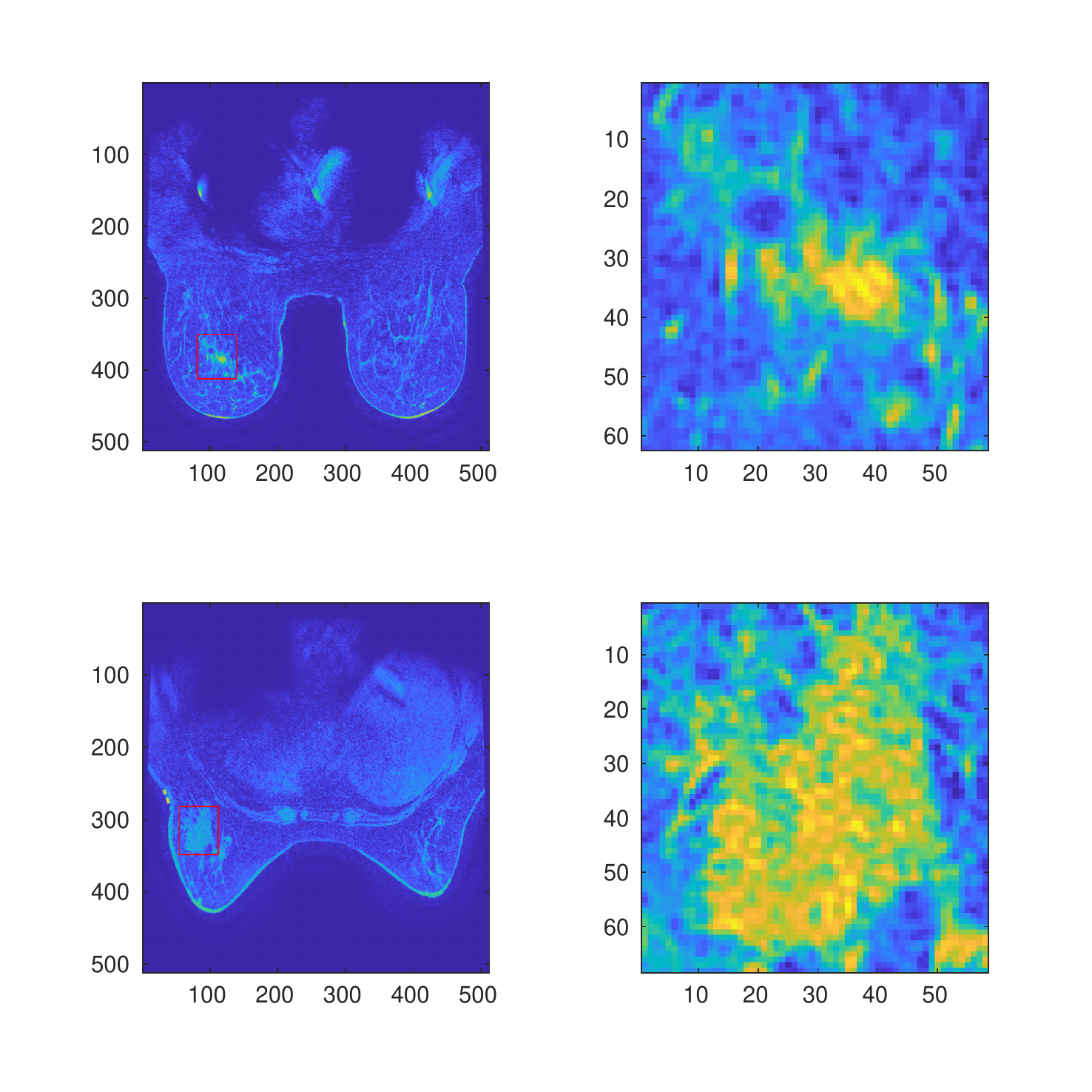}
\caption{Example slices of the considered annotation masks. On Left: transaxial slices of MRI scans. The redboxes denote the annotation masks (which are depicted in detail on right). The top row is from an ER and PR positive, Luminal Subtybe A patient, while the bottom is a Triple Negative Breast Cancer.} \label{fig:example2}
\end{figure}

\begin{table}[h!]
\caption{\label{table:freqs} Frequencies of the molecular markers and subtypes in the data. The top row records the number of patients with the corresponding Marker/Subtype, and the bottom patients without it.}
\begin{tabular}{|c|c|c|c|c|c|c|c|} \hline
Marker & ER + & PR + & HER2 + & Luminal A & Luminal B & HER2 & TNBC \\ \hline
Positive & 686         & 598         & 163           & 104       & 595       & 59   & 163  \\
Negative & 236         & 324         & 759           & 818       & 327       & 863  & 759 \\ \hline
\end{tabular}
\end{table}

\subsubsection{Computing the transform}

The annotation masks of the fields vary in size. To make the transforms comparable, we scale each annotation mask to $[-1,1]^3$ and the field values collectively to $[0,1]^3$. Each field is then supported in the cube $[-1,1]^3$, and the field values are restricted to $[0,1]$. However, most fields do not exhibit such high variation; the values they attain are much more concentrated. A select few fields with their minimum and maximum values are presented in Figure \ref{forestplot}.

As described in the earlier sections, we treat the pixel data from the MRI images as an evaluation of a field on a grid. The superlevel sets of this field can again be extracted using the matlab isosurface procedure. To be able to compute the transform, we need to pick the function value thresholds that we evaluate the transform on.

An obvious idea for selecting the function value thresholds would be to select them uniformly on the interval $[0,1]$, like we did with the simulated dataset. However, as is seen in Figure \ref{forestplot}, the field values are not spread uniformly, they seem to concentrate on the lower end. We want to pick the thresholds where there is most information, therefore we choose a set of loosely exponentially spread thresholds, in the same spirit as in Example \ref{parameterexample}. The distances between the transforms are then obtained through simple numerical integration.

\begin{figure}[!ht]
    \centering
    \includegraphics[width=12cm]{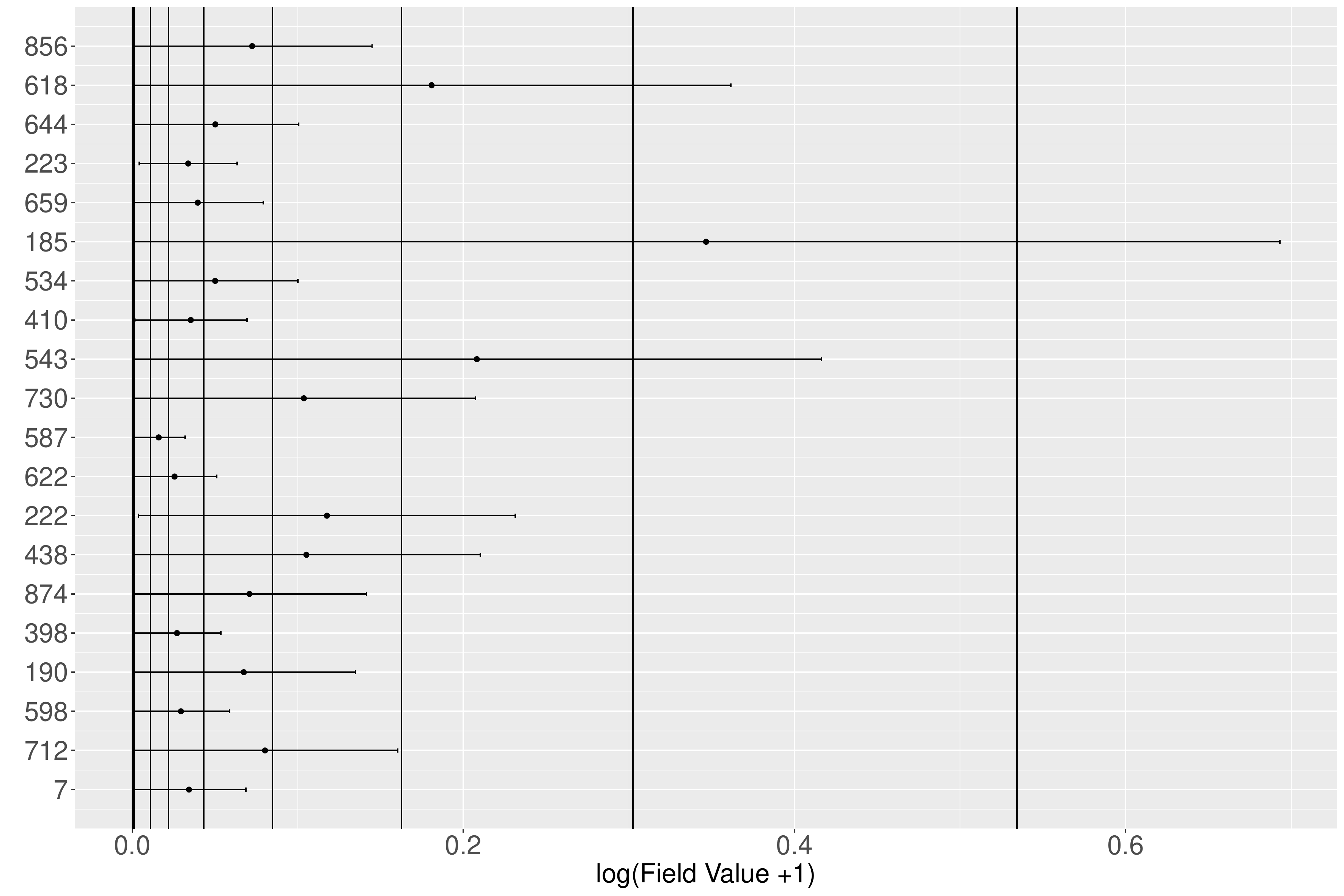}
    \caption{The fields have large differences in their variability. Here are 50 displayed on a logarithmic scale, with the field minimum, maximum and average represented by the horizontal bars. To ensure a non-trivial representation for all of the fields, our field value filtration concentrates mainly on the smaller side. The exact threshold values are \{5,10,100,200,400,800,1600,3200,6400\}/9061. The thresholds are marked by vertical lines.}%
    \label{forestplot}%
\end{figure}

For each superlevel set, we evaluate the Super Lifted Euler Characterstic Transform on a grid of 362 approximately uniformly spread directions, and 100 uniformly spread height filtrations. As we have 9 function value filtrations, each field is then represented as a $362 \times 100 \times 9$ vector.

\subsubsection{Modeling}

To get a meaningful benchmark, we follow the steps taken in \cite{Saha}. This means our prediction metric is Area Under the Curve (AUC), and we use a 50-50 test/train split to determine this metric. We fit a support vector machine with the standard double exponential kernel described, i.e.

$$
K(x_i,x_j) = e^{-\frac{d(x_i,x_j)^2}{\lambda^2}}.
$$

The bandwidth is obtained through the median distance heuristic:

$$
\lambda = \textrm{median}_{i \neq j} d(x_i,x_j).
$$

This is a widely used and generic classification model. As a kernel method it has the advantage that the model only uses an $n$ by $n$ matrix of dissimilarities and an $n$-binary vector as its input. This lets us isolate the data intensive computations from modeling.
For distance we use the distance of the Super Lifted Transforms, and for comparison we also consider the distance obtained from margninalizing the SELECT. 

\subsubsection{Results}

\begin{table}[!htbp] \centering 
  \caption{\label{tableRes}  The Area Under the Curve for the 2 methods. Both use 50-50 train and test split and DeLong's method to compute the confidence intervals.} 
\begin{tabular}{@{\extracolsep{1pt}} cccc|ccc|ccc} 
\\[-1.8ex]\hline 
\hline \\[-1.8ex]
 & \multicolumn{3}{c|}{\cite{Saha}} & \multicolumn{3}{c|}{SELECT} & \multicolumn{3}{c}{Marginal SELECT}  \\
\\[-1.8ex]\hline 
\hline \\[-1.8ex]  
 & 2.5\% & Est. & 97.5\% & 2.5\% & Est. & 97.5\% & 2.5\% & Est. & 97.5\% \\ 
\hline \\[-1.8ex] 
Luminal A & 0.647 & 0.697 & 0.746 & 0.513 & 0.568 & 0.623 & 0.462 & 0.519 & 0.576 \\ 
Luminal B & 0.494 & 0.566 & 0.638 & 0.434 & 0.523 & 0.611 & 0.419 & 0.501 & 0.583 \\ 
HER2 & 0.539 & 0.633 & 0.727 & 0.548 & 0.641 & 0.734 & 0.526 & 0.611 & 0.697 \\ 
TNBC & 0.589 & 0.654 & 0.720 & 0.491 & 0.558 & 0.625 & 0.449 & 0.521 & 0.593 \\ 
ER+ & 0.591 & 0.649 & 0.705 & 0.552 & 0.610 & 0.668 & 0.523 & 0.582 & 0.641 \\ 
PR+ & 0.569 & 0.622 & 0.674 & 0.527 & 0.582 & 0.636 & 0.518 & 0.574 & 0.630 \\ 
HER2+ & 0.433 & 0.500 & 0.567 & 0.464 & 0.535 & 0.605 & 0.532 & 0.596 & 0.660 \\ 
\hline \\[-1.8ex] 
\end{tabular} 

\end{table}

The results and those achieved in \cite{Saha} are presented in Table \ref{tableRes}. We see that SELECT performs similarly to the machine learning methods on this homogeneous set of data. The overall classification rates seem somewhat higher for the machine learning methods, but with the overlapping intervals the evidence is by and large inconclusive. The most notable difference is in predicting the Luminal A subtype, with non-overlapping confidence intervals. Another notable difference is in predicting the TNBC, where the ML methods achieve better results, with minor overlap in the confidence intervals. Contrary to the machine learning methods in \cite{Saha}, we use the same model for all of the tasks, and our classifier is a standard non-tailored classifier with default parameters. We conclude that SELECT can capture the signal despite the large technical variation in the dataset.

When comparing the SELECT distance to the marginal distances, we see that the distance from marginal Euler curves seems to perform worse than the distance computed from the full Transform. Although the differences are not significant, this suggests the full transform may be more informative.

\section{Discussion}

In this paper we have introduced the Lifted Euler Characteristic Transform that generalizes the Euler Characteristic Transform \cite{PHT,CMT} to definable functions. Concretely this allows us to analyze field type of data that do not fit into the shape framework. We have proven that the  transform is injective and a stratified map, properties that are appealing both theoretically and for applications. We have shown that for certain non-trivial Moduli spaces of fields there is an upper bound on the number of directions that determine any field in the space. We have also
demonstrated the practical utility of the transform on simulated and real data.

The ideas presented in this paper point to further research directions of theoretical and applied interest, such as:
\begin{enumerate}
\item Moving from simply regression to subfield selection. The lifted transform preserves the most important theoretical properties of its shape analog. In \cite{SINATRA} a procedure was proposed to extract the three-dimensional coordinates of shapes that are most important for 
differentiating between two classes of shapes, this is the problem of subshape selection. The stratification properties of the ECT were central
in computing a  pullback from the transform to coordinates on a shape; a central step in subimage selection. Adapting the approach to subimage 
selection \cite{SINATRA} to fields would be of great interest.
\item Do marginal Euler curves determine a field? This is certainly true for positive piecewise constant fields by Proposition \ref{prop:weighted}. Could the marginals also be used for feature selection problems?
\item The key viewpoint in this paper is to look at fields as carriers of geometric information as opposed to considering them as weights. The idea of integrating geometric information is known to the experts of algebraic geometry by the name of motivic integration. Is there a motivic construction to the framework we propose? Promising work in this direction was presented in a recent preprint \cite{leb21}.
\item Manifolds decorated with more complicated structures. A function assigns a value to each point on a manifold. Lifting this is straightforward and this allows us to analyze for example MRI images. However, there are imaging modalities where more complicated objects are assigned. One such modality is Diffusion Tensor Imaging, in which a flow matrix is assigned to each point. Could we use a similar lift to study these, and what properties would that transform have?
\end{enumerate}

\section*{Acknowledgments}

The authors would like to acknowledge conversations with Justin Curry, Kate Turner, Rob Ghrist, Robert Adler, Xiaojun Zheng, and Lorin Crawford. HK would like to thank Maciej Mazurowski, Ashibrani Saha, and Marc Ryser for helpful conversations. The authors would also like to thank the anonymous reviewers for their insightful comments.
The authors would like to acknowledge partial funding from HFSP RGP005, NSF DMS 17-13012, NSF BCS 1552848, NSF DBI 1661386, NSF IIS 15-46331, 
NSF DMS 16-13261, as well as high-performance computing partially supported by grant 2016-IDG-1013 from the North Carolina
Biotechnology Center. Any opinions, findings, and conclusions or recommendations expressed in this
material are those of the author(s) and do not necessarily reflect the views of any of the funders.

\appendix

\section{Schapira's inversion formula}\label{secA1}

To make the paper self contained we provide the statement of Schapira's inversion formula as it will be essential in proving injectivity properties 
of the topological transforms we propose in this paper.

Euler calculus comes with a set of canonical operations including pullbacks, pushforwards, and convolution.

\begin{defn}
Let $f: X \to Y$ be a tame mapping between between definable sets.
Let $\phi_Y: Y \to \mathbb{Z}$ be a constructible function on $Y$.
The \define{pullback} of $\phi_Y$ along $f$ is defined pointwise by
\[
f^*\phi_Y(x)=\phi_Y(f(x)).
\]
The pullback operation defines a ring homomorphism $f^*:\CF(Y) \to \CF(X)$.
\end{defn}

The dual operation of pushing forward a constructible function along a tame map is given by integrating along the fibers. 

\begin{defn}
The \define{pushforward} of a constructible function $\phi_X:X \to \mathbb{Z}$ along a tame map $f:X\to Y$ is given by
\[
f_*\phi_X(y)=\int_{f^{-1}(y)} \phi_X d\chi.
\]
This defines a group homomorphism $f_*: \CF(X) \to \CF(Y)$.
\end{defn}

Putting these two operations together allows one to define our first topological transform: the Radon transform.

\begin{defn}
Suppose $S\subset X \times Y$ is a locally closed definable subset of the product of two definable sets.
Let $\pi_X$ and $\pi_Y$ denote the projections from the product onto the indicated factors.
The \define{Radon transform with respect to $S$} is the group homomorphism $\Rad_S : \CF(X) \to \CF(Y)$ that takes a constructible function on $X$, $\phi:X \to \mathbb{Z}$, pulls it back to the product space $X\times Y$, multiplies by the indicator function of $S$ before pushing forward to $Y$. 
In equational form, the Radon transform is
\[
\Rad_S (\phi):=\pi_{Y*} [(\pi_X^*\phi) 1_S].
\]
\end{defn}

The following inversion theorem of Schapira~\cite{Schapira:tom} gives a topological criterion for the invertibility of the transform $\Rad_S$ in terms of the subset $S\subset X \times Y$.

\begin{thm}[\cite{Schapira:tom} Theorem 3.1]\label{thm:inversion}
If $S\subset X\times Y$ and $S'\subset Y\times X$ have fibers $S_x$ and $S'_x$ in $Y$ satisfying
\begin{enumerate}
	\item $\chi(S_x\cap S'_x)=\chi_1$ for all $x\in X$, and
	\item $\chi(S_x\cap S'_{x'})=\chi_2$ for all $x'\neq x \in X$,
\end{enumerate}
then for all $\phi\in \CF(X)$,
\[
(\Rad_{S'} \circ \Rad_{S})\phi = (\chi_1-\chi_2)\phi +\chi_2 \left(\int_X \phi d\chi\right) 1_X .
\]
\end{thm}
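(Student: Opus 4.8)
The plan is to collapse the composition $\Rad_{S'}\circ\Rad_S$ to a single Euler integral over $X$ and then split that integral at the diagonal. Unwinding the definitions of pull-back and push-forward, for $\phi\in\CF(X)$ and a point $z$ in the target copy of $X$,
\[
(\Rad_{S'}\circ\Rad_S)(\phi)(z)=\int_{y\in Y}1_{S'}(y,z)\Big(\int_{x\in X}1_S(x,y)\,\phi(x)\,d\chi(x)\Big)\,d\chi(y).
\]
The first step is to interchange the two integrations. I would do this by passing to the triple product $X\times Y\times X$ and applying Schapira's operational calculus for constructible functions — functoriality of push-forward, the base-change identity for the obvious fibre-product square over $Y$, and the projection formula $g_*(g^*\alpha\cdot\beta)=\alpha\cdot g_*\beta$ — to rewrite the composite as $r_*\big[(p_X^*\phi)\,1_{\widetilde S}\big]$, where $\widetilde S=\{(x,y,z):(x,y)\in S,\ (y,z)\in S'\}$, $p_X$ is the projection to the first factor, and $r$ is the projection to the last factor; equivalently, one can simply invoke the Fubini theorem for Euler integration along definable maps. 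Either way, integrating out $y$ first identifies the composite with
\[
(\Rad_{S'}\circ\Rad_S)(\phi)(z)=\int_{x\in X}\phi(x)\,\chi\big(S_x\cap S'_z\big)\,d\chi(x),
\]
since $\int_Y 1_S(x,y)1_{S'}(y,z)\,d\chi(y)=\chi(S_x\cap S'_z)$.

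The second step is to feed in the two hypotheses, which say precisely that the kernel $(x,z)\mapsto\chi(S_x\cap S'_z)$, as a constructible function on $X\times X$, equals $\chi_2\cdot 1_{X\times X}+(\chi_1-\chi_2)\cdot 1_\Delta$ with $\Delta$ the diagonal. Substituting and using linearity of the Euler integral,
\[
(\Rad_{S'}\circ\Rad_S)(\phi)(z)=\chi_2\int_X\phi\,d\chi+(\chi_1-\chi_2)\int_X\phi(x)\,1_\Delta(x,z)\,d\chi(x).
\]
The last integral equals $\phi(z)\,\chi(\{z\})=\phi(z)$, and the first term is independent of $z$, so as a function of $z$ the right-hand side is $(\chi_1-\chi_2)\phi+\chi_2\big(\int_X\phi\,d\chi\big)1_X$, which is the asserted formula.

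I expect the main obstacle to be the rigorous justification of the interchange of the two Euler integrals in the first step: the Euler integral is only finitely additive, not a genuine measure, so Fubini is not automatic and must be obtained either by citing Schapira's operational calculus or by checking the base-change and projection formulas in this setting. Both routes ultimately rely on the o-minimal triangulation/trivialization theorem to guarantee that every intermediate object — $1_{\widetilde S}$, the fibrewise Euler characteristic $(x,z)\mapsto\chi(S_x\cap S'_z)$, and the push-forwards above — is again a constructible function, and that this fibrewise characteristic is locally constant off the locus where the hypotheses force its only jump. One also needs $S$ and $S'$ to be locally closed and definable so that their indicator functions lie in $\CF$ and all the push-forwards are defined; granting that, the remainder is bookkeeping with the six-operations-lite formalism.
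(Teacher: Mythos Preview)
The paper does not supply its own proof of this theorem: it is stated in the appendix purely for self-containment and attributed to Schapira's original paper \cite{Schapira:tom}, with no argument given. There is therefore nothing in the paper to compare your proposal against.

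That said, your outline is correct and is essentially the standard argument. The two-step structure --- first collapse the composition to an integral against the kernel $(x,z)\mapsto\chi(S_x\cap S'_z)$ via a Fubini-type interchange, then split that kernel as $\chi_2\,1_{X\times X}+(\chi_1-\chi_2)\,1_\Delta$ using the hypotheses --- is exactly how the inversion formula is derived in the constructible-function formalism. You have also correctly located the only substantive technical point: the interchange of the two Euler integrals is not automatic because $\chi$ is only finitely additive, and one must appeal either to Schapira's operational calculus (base change and projection formula in $\CF$) or to an explicit Fubini for definable maps, both of which ultimately rest on o-minimal triangulation/trivialization. Your handling of the diagonal term, $\int_X\phi(x)\,1_\Delta(x,z)\,d\chi(x)=\phi(z)\chi(\{z\})=\phi(z)$, is also correct.
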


\section{Details on the breast cancer MRI dataset}\label{secA2}
In this section we provide further details on the MRI application.
\subsection{The Dataset}
The dataset contains imaging data for 922 female patients observed from Jan 1, 2000 to Mar 23, 2014 with invasive breast cancer. 

The dataset includes information on three molecular markers: The Estrogen receptor (ER+), progesterone receptor (PR+) and the human epidermal growth factor receptor 2 (HER2+). These are binary labels indicating presence of the corresponding receptor, which is obtained by thresholding Allred score based on an immunohistochemistry analysis. From the three binary labels, the four cancer subtypes were defined using a standard identification rule. The identification rule is presented in Table \ref{tbl:markers}.

The seven classification tasks corresponding to the 3 markers and 4 subtypes are the same tasks as in \cite{Saha}. In that work, the authors also considered the proliferation marker Ki-67, but as this cannot be inferred from the publically available dataset, we omit it from our analyses.

\begin{table}[!h]
\centering
\caption{Determing the cancer subtype by the presence of molecular markers. 1 denotes positive status, 0 negative.}\label{tbl:markers}
\begin{tabular}{c|ccc}
                              & \multicolumn{3}{c}{Markers}                             \\ \hline
Subtype                       & \multicolumn{1}{c|}{ER+} & \multicolumn{1}{c|}{PR+} & HER2+ \\ \hline %\\[-1.8ex]\hline 
Luminal B                     & \multicolumn{1}{c|}{1}  & \multicolumn{1}{c|}{1}  & 1    \\ \hline
Luminal A                     & \multicolumn{1}{c|}{1}  & \multicolumn{1}{c|}{1}  & 0    \\ \hline
Luminal B                     & \multicolumn{1}{c|}{1}  & \multicolumn{1}{c|}{0}  & 1    \\ \hline
Luminal B                     & \multicolumn{1}{c|}{0}  & \multicolumn{1}{c|}{1}  & 1    \\ \hline
Luminal A                     & \multicolumn{1}{c|}{1}  & \multicolumn{1}{c|}{0}  & 0    \\ \hline
Luminal A                     & \multicolumn{1}{c|}{0}  & \multicolumn{1}{c|}{1}  & 0    \\ \hline
HER2                          & \multicolumn{1}{c|}{0}  & \multicolumn{1}{c|}{0}  & 1    \\ \hline
Triple Negative Breast Cancer & \multicolumn{1}{c|}{0}  & \multicolumn{1}{c|}{0}  & 0    \\ \hline
\end{tabular}
\end{table}

\subsection{Data Processing}

Each patient has multiple post-contrast fat-saturated MRI sequences. The number of sequences varies from around 3 to 5 per patient and there is no correspondence between the sequences. In order to obtain a single field for each patient, we use pointwise average of the post-contrast MRI sequences. An illustration of this procedure is given in Figure \ref{AVERAGES}. We see the differences between the sequences are small, so the mean aggregation is likely sufficiently expressive compared to a more complicated model that considers all the sequences as separate inputs.

\begin{figure}[h]
\centering
\includegraphics[width=12cm]{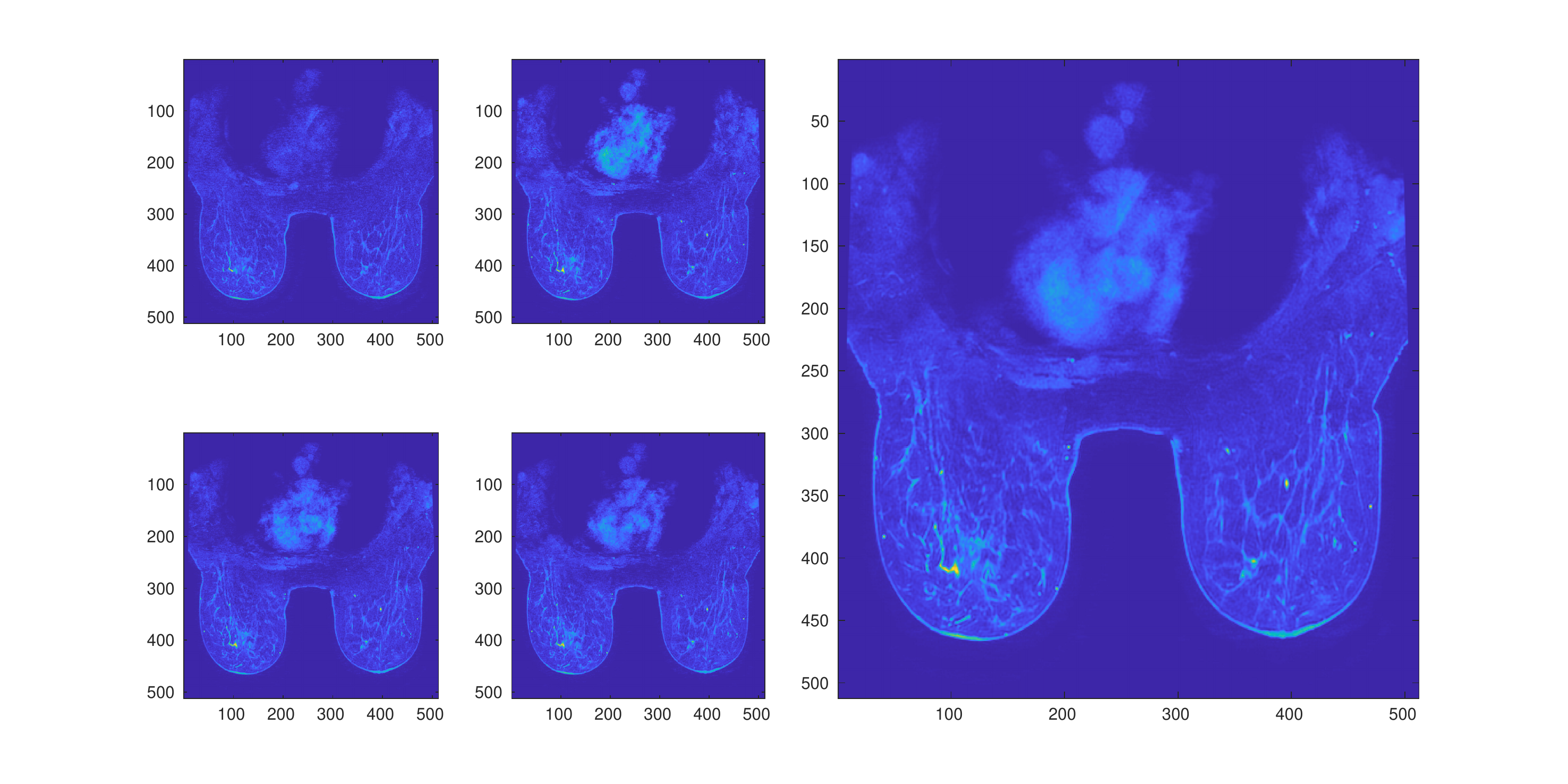}
\caption{On left: The raw post-contrast sequences. Right: The pointwise average used in the analysis.}\label{AVERAGES}
\end{figure}

\subsection{Details on the approach taken in \cite{Saha}}

The models considered in \cite{Saha} are machine learning models fitted on 529 imaging features extracted based on radiomics and computer-aided diagnosis. These features are built on an extensive amount of clinical research and they include, among others: Breast and fibrograndular tissue (FGT) volume features, Tumor size and morphology, tumor texture, FGT, FGT enhancement, tumor enhancement and combinations thereof. The exact set of features is detailed in the supplementary material of \cite{Saha}.

A separate model was fitted for each of the tasks. The models started with $N$ features that were selected based on a univariate model that maximizes the AUC curve for predicting the label. Among these $N$ features, highly correlated features were removed from the model, and a random forest was fitted based on the remaining features. The random forest hyperparameters, $N$ and the correlation threshold were optimized using cross validation maximizing the Area Under the Curve.

\end{document}